\providecommand{\abs}[1]{\left\lvert#1\right\rvert}
\def\CF{{\widehat{\mathscr{P}}}}
\def\D{{\mathcal{D}}}
\def\S{{\mathcal{S}}}
\def\R{{\mathcal{R}}}
\def\Lop{\mathrm{L}} 
\def\Top{\mathrm{T}} 
\def\One{\mathds{1}} 
\def\C{ \mathbb{C}}
\def\N{ \mathbb{N}}
\def\R{ \mathbb{R}}
\def\drm{\mathrm{d}}
\def\Der{\mathrm{D}}
\def\FL{(-\Delta)^{\gamma/2}}
\def\loc{\mathrm{loc}}
\begin{document}

\title{Scaling Limits of Solutions of  Linear SDE  Driven by  L\'evy White Noises}

\author{Julien Fageot\affil{1} and Michael Unser\affil{1}}

\address{%
\affilnum{1}\'Ecole polytechnique f\'ed\'erale de Lausanne, Biomedical Imaging Group, Lausanne 1015, Switzerland}

\correspdetails{julien.fageot@epfl.ch}

\received{1 Month 20XX}
\revised{11 Month 20XX}
\accepted{21 Month 20XX}


\begin{abstract}
Consider a random process $s$ solution of the stochastic differential equation $\Lop s = w$ with $\Lop$ a  homogeneous operator and $w$ a multidimensional L\'evy white noise.
In this paper, we study the asymptotic effect of zooming in or zooming out of the process $s$. More precisely, we give sufficient conditions on $\Lop$ and $w$ such that $a^H s(\cdot / a)$ converges in law to a non-trivial self-similar process for some $H$, when $a \rightarrow 0$ (coarse-scale behavior) and $a \rightarrow \infty$ (fine-scale behavior).
The parameter $H$ depends on the homogeneity order of the operator $\Lop$ and the Blumenthal-Getoor and Pruitt indices associated to the L\'evy white noise $w$. Finally, we apply our general results to several notorious classes of  random processes and random fields
and illustrate our results on simulations of L\'evy processes.
\end{abstract}

\maketitle



\section{Introduction}

A random process $s$ is self-similar if there exists $H$, called the {self-similarity index} of $s$, such that the rescaled process $a^H s(\cdot / a)$ has the same probability law than $s$ for every $a>0$.
A L\'evy process is a stochastically continuous random process $X = (X(t))_{t\in \R}$ that vanishes at $0$ and with stationary and independent increments. When the marginals of $X(t)$ are symmetric-$\alpha$-stable (S$\alpha$S), the process $X$ is self-similar   \cite{Taqqu1994stable}. More precisely, for the S$\alpha$S process $X_{\alpha}$ with $0< \alpha \leq 2$, we have that
\begin{equation}
		a^{1/\alpha} X_\alpha(t / a) \overset{(d)}{=} X_\alpha(t),
\end{equation}
for every $t \in \R$ and $a>0$. The self-similarity index of $X_\alpha$ is therefore $H = 1 / \alpha$. The case $\alpha = 2$ corresponds to the Brownian motion.
However, the L\'evy process $X$ is no longer self-similar when the noise is not stable.

The study of self-similar processes (indexed by $t \in \R$)  and fields (indexed by $\bm{x} \in \R^d$ with $d \geq 2$) is a branch of probability theory  \cite{Embrechts2000introduction}. 
They have been applied in areas such as  
signal and image processing \cite{Blu2007self,Fageot2015wavelet,Pesquet2002stochastic} 
or traffic networks \cite{Leland1994self,Mikosch2002network}, among   others \cite{Mandelbrot1982fractal,Mandelbrot1997self}. 
Many prominent random processes are self-similar, including fractional Brownian motion \cite{Mandelbrot1968}, its higher-order extensions \cite{Perrin2001nth}, infinite-variance stable processes \cite{Taqqu1994stable}, 
and their fractional versions \cite{Huang2007fractional}. 
The case of random fields have also been investigated both in the Gaussian \cite{Bentkus1981selfsimilar,Bierme2017invariance,Dobrushin1979gaussian,Lodhia2016fractional,TaftifBvf} 
and the $\alpha$-stable case  \cite{Ayache2007local,Bentkus1981selfsimilar,Bierme2007operator}. 

As already seen below, the self-similarity is intimately linked with stable laws \cite{Taqqu1994stable}, since they are  the only possible probabilistic limits of the renormalized sum of independent and identically distributed random variables. This is the well-known (generalized) central-limit theorem \cite[Section XVII-5]{Feller2008introduction}, with the consequence that self-similar processes are often scaling limits of many discretization schemes and stochastic models \cite{Bierme2010,Breton2009rescaled,Dombry2009discrete,Kaj2007scaling,Sinai1976self}. 

The self-similarity imposes a strong constraint on the law of the random process. In particular, it intimately links the behaviors at coarse and fine scales.
We have mentioned how the self-similar models have been successfully used,  but, it can also appear to be too restrictive.
One advantage of the family of L\'evy processes and their generalizations is to overcome this restriction. 

In this paper, we focus on the impact of rescaling operations for a  broad class of random processes that are asymptotically or locally self-similar.
These processes are specified as the solutions of a stochastic differential equation  of the form 
\begin{equation} \label{eq:main}
\Lop s = w,
\end{equation}
where $w$ a multidimensional L\'evy white noise and $\Lop$ is a differential operator on the functions from $\R^d$ to $\R$. We assume moreover that the operator $\Lop$ is homogeneous of some order $\gamma \geq 0$, in the sense that $\Lop \{ \varphi ( \cdot / a ) \} = a^{-\gamma} ( \Lop \varphi ) (\cdot / a)$ for any function $\varphi$ and $a>0$. Typically, in dimension $d=1$, the derivative is homogeneous of order $1$.
Our aim is to study the statistical behavior of the rescaling $\bm{x} \mapsto s(\bm{x} / a)$ of a solution of \eqref{eq:main} when $a>0$ is varying. Our two main questions are:
\begin{itemize}
	\item What is the asymptotic behavior of $s( \cdot / a )$ when we zoom out the process (\emph{i.e.}, when $a \rightarrow 0$)?
	\item What is the local behavior when we zoom in (\emph{i.e.}, when $a \rightarrow \infty$)?
\end{itemize} 

Our main contribution is to identify sufficient conditions such that the rescaling $a^H s(\cdot / a)$ of a solution of \eqref{eq:main} has a non-trivial self-similar asymptotic limit as $a$ goes to $0$ or $\infty$. When this limit exists, the parameter $H$ is unique and depends essentially on the degree of homogeneity $\gamma$ of $\Lop$ and on the Blumenthal-Getoor and Pruitt indices $\beta_\infty$ and $\beta_0$ of $w$ \cite{Blumenthal1961sample,Pruitt1981growth}. The indices $\beta_0$ and $\beta_\infty$ are used in the literature to characterize the asymptotic and local  behaviors of L\'evy processes, and more generally L\'evy-type processes \cite{Bottcher2014levy}.
We summarize the main results of our paper in Theorem \ref{theorem:summary}. Precise definitions and rigorous statements are given later.
\begin{theorem} \label{theorem:summary}
	Let $\Lop$ be a $\gamma$-homogeneous operator and $w$ a L\'evy process with indices $\beta_\infty$ and $\beta_0$. Under some technical conditions, the solution $s$ to the equation $\Lop s = w$ has the following properties. 
	\begin{itemize}
		\item The process $s$ is asymptotically self-similar of order $H_\infty = \gamma + d\left( 1 / \beta_0 - 1\right)$, in the sense that $a^{H_\infty} s(\cdot / a)$ converges to a self-similar process of order $H_\infty$ when $a \rightarrow 0$.
		\item The process $s$ is locally self-similar of order $H_{\mathrm{loc}} = \gamma + d\left( 1 / \beta_\infty - 1\right)$, in the sense that $a^{H_{\mathrm{loc}}} s(\cdot / a)$ converges to a self-similar process of order $H_{\mathrm{loc}}$ when $a \rightarrow \infty$.
	\end{itemize}
\end{theorem}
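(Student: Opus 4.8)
The plan is to argue entirely at the level of characteristic functionals, since Theorem~\ref{theorem:summary} is really a statement about the laws of $\S'(\R^d)$-valued random fields, and convergence in law of such fields is equivalent, by a L\'evy-type continuity theorem, to pointwise convergence of their characteristic functionals together with a mild tightness (equicontinuity) condition. Write $\Lexp$ for the L\'evy exponent of the white noise, so that $\CF_w(\varphi) = \exp\lp \int_{\R^d} \Lexp(\varphi(\bm x)) \dd\bm x \rp$, and realize the solution of $\Lop s = w$ as $s = \Lop^{-1} w$ for a suitable right inverse; then
\[
\CF_s(\varphi) \,=\, \CF_w\lp \Lop^{-1*}\varphi \rp \,=\, \exp\lp \int_{\R^d} \Lexp\lp (\Lop^{-1*}\varphi)(\bm x) \rp \dd\bm x \rp ,
\]
with $\Lop^{-1*}$ the adjoint of $\Lop^{-1}$. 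The first task is to make this formula legitimate: one must produce an inverse of $\Lop$ whose adjoint maps Schwartz functions into the Orlicz-type space on which $\int \Lexp(\cdot)$ is finite. Because the kernel of $\Lop^{-1*}$ decays only polynomially, at a rate governed by $\gamma$ and $d$, while the integrability threshold of $\Lexp$ is governed by $\beta_0$ and $\beta_\infty$, the compatibility of these two ingredients is exactly what the unstated ``technical conditions'' must guarantee.

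Next I would carry out a direct scaling computation; abbreviate $s_a := a^H s(\,\cdot/a)$. From $\scal{s_a}{\varphi} = \scal{s}{a^{H+d}\varphi(a\,\cdot)}$, and from the fact that the $\gamma$-homogeneity of $\Lop$ forces the identity $\Lop^{-1*}\{\varphi(\,\cdot/b)\} = b^{\gamma} (\Lop^{-1*}\varphi)(\,\cdot/b)$, I obtain after the substitution $\bm y = a\bm x$
\[
\CF_{s_a}(\varphi) \,=\, \exp\lp a^{-d} \int_{\R^d} \Lexp\lp a^{H+d-\gamma}\, g(\bm y) \rp \dd\bm y \rp , \qquad g := \Lop^{-1*}\varphi .
\]
Taking $H = H_\infty = \gamma + d(1/\beta_0 - 1)$ turns the factor in front of $g$ into $a^{d/\beta_0}$, and taking $H = H_{\mathrm{loc}} = \gamma + d(1/\beta_\infty - 1)$ turns it into $a^{d/\beta_\infty}$; in the coarse-scale regime $a \to 0$ the former tends to $0$, in the fine-scale regime $a \to \infty$ the latter tends to $\infty$, and these are exactly the regimes of the argument of $\Lexp$ whose behavior is encoded, respectively, by the Pruitt index $\beta_0$ and the Blumenthal--Getoor index $\beta_\infty$.

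The analytic core is then the scalar asymptotic $\mu^{-\beta}\Lexp(\mu\xi) \to \Lexp_\beta(\xi)$, as $\mu \to 0$ with $\beta = \beta_0$ (respectively as $\mu \to \infty$ with $\beta = \beta_\infty$), where $\Lexp_\beta$ is the exponent of a $\beta$-stable law; this convergence is what the regular-variation hypotheses hidden in the statement deliver, the indices $\beta_0$ and $\beta_\infty$ being by definition the critical exponents controlling $\Lexp$ at $0$ and at $\infty$. Feeding this into the integral by dominated convergence, using the membership of $g$ in the relevant Orlicz space established in the first step to produce a dominating function uniform in $a$, yields
\[
\lim \CF_{s_a}(\varphi) \,=\, \exp\lp \int_{\R^d} \Lexp_\beta\lp (\Lop^{-1*}\varphi)(\bm y) \rp \dd\bm y \rp \,=:\, \CF_\infty(\varphi) .
\]
One checks that $\CF_\infty$ is the characteristic functional of a genuine, non-degenerate generalized random process (non-degeneracy uses $\beta > 0$, so that $\Lexp_\beta \not\equiv 0$), concludes convergence in law from the continuity theorem together with the tightness bound, and finally reads off self-similarity of order $H$ by repeating the second step's computation with $\CF_\infty$ in place of $\CF_s$: since $\Lexp_\beta$ is exactly $\beta$-homogeneous the exponent picks up a factor $a^{-d + \beta(H+d-\gamma)}$, which equals $1$ precisely because $H + d - \gamma = d/\beta$. (The same computation also shows $H$ is uniquely determined: any other choice makes the limit trivial or makes it fail to exist.)

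The step I expect to be the \emph{main obstacle} is the uniform domination required to exchange the limit and the integral. The difficulty is that $g = \Lop^{-1*}\varphi$ decays only polynomially, so on the tail the argument $a^{H+d-\gamma} g(\bm y)$ of $\Lexp$ is not uniformly small (respectively not uniformly large); one must control simultaneously the compact regime, where the behavior of $\Lexp$ near $0$ (respectively near $\infty$) applies, and the tail regime, where the polynomial decay rate of $g$, the dimension $d$, and the index $\beta$ must balance so that the integral stays finite uniformly in $a$. It is exactly this balance that pins down the technical hypotheses and makes the choice of a dominating function the delicate point of the argument.
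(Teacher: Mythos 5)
Your outline follows the paper's own route almost step for step: express $\CF_s(\varphi)=\CF_w(\Top\varphi)$ with a $(-\gamma)$-homogeneous left inverse $\Top$ of $\Lop^*$, rescale to get $\log\CF_{a^{H}s(\cdot/a)}(\varphi)=\int_{\R^d}a^{-d}\Psi\lp a^{d/\beta}\,\Top\varphi(\bm y)\rp\dd\bm y$, pass to the limit pointwise using $\Psi(\xi)\sim -C|\xi|^{\beta}$ at $0$ (resp.\ $\infty$), and identify the limit as the S$\alpha$S-driven, hence self-similar, process. Two remarks. First, the ``mild tightness'' you invoke is not needed: on the dual of a nuclear Fr\'echet space such as $\S'(\R^d)$, pointwise convergence of characteristic functionals is \emph{equivalent} to convergence in law (Fernique/Boulicaut; the paper's Theorem~\ref{theo:BoulicautLevy}), which is precisely why the whole argument can be run at the level of $\CF$. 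Since you neither prove tightness nor cite the theorem that dispenses with it, as written this leaves a hole, albeit one that is closed by quoting the right continuity theorem.

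Second, and more substantively, the step you flag as ``the main obstacle'' --- producing a dominating function uniform in $a$ --- is exactly the part you leave undone, and it is where the actual hypotheses live; gesturing at ``membership of $g$ in the relevant Orlicz space'' does not close it. The paper resolves it by two concrete regimes. Under (C2) one \emph{assumes} $\beta$-admissibility, $|\Psi(\xi)|\leq C|\xi|^{\beta}$ globally, so $|a^{-d}\Psi(a^{d/\beta}\Top\varphi(\bm y))|\leq C'|\Top\varphi(\bm y)|^{\beta}\in L^1$ and dominated convergence is immediate. Under (C1) with $a\to 0$ the argument $a^{d/\beta_0}\Top\varphi(\bm y)$ stays in the bounded set $[-\lVert\Top\varphi\rVert_\infty,\lVert\Top\varphi\rVert_\infty]$, so a bound on $\Psi$ near the origin suffices. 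The genuinely delicate case is (C1) with $a\to\infty$: there is no global power bound, and the fix is to split the integral into the region where $|\Top\varphi(\bm y)|a^{d/\beta_\infty}\geq 1$ (dominated by $C'|\Top\varphi(\bm y)|^{\beta_\infty}$ using only the asymptotic bound on $\Psi$) and its complement, which is shown to vanish like $a^{d(\epsilon/\beta_\infty-1)}\lVert\Top\varphi\rVert_\epsilon^{\epsilon}$ using the small-argument bound $|\Psi(\xi)|\leq C|\xi|^{\epsilon}$ with $\epsilon<\beta_\infty$ that comes for free from temperedness of the noise. Without this splitting (or the admissibility assumption) your dominated-convergence step does not go through, so the missing ingredient is not a technicality but the identification of (C1)/(C2) themselves. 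A last caution: the exact asymptotic $\Psi(\xi)\sim -C|\xi|^{\beta}$ is strictly stronger than knowing the indices $\beta_0,\beta_\infty$; the paper points to L\'evy exponents oscillating between two power laws for which no scaling limit exists, so this must be stated as a hypothesis rather than derived from the definition of the indices.
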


The paper is organized as follows: In Section \ref{sec:GRP}, we introduce the framework of generalized random processes, while  the general class of random processes of interest is addressed  in Section \ref{sec:SDE}. We precise and demonstrate Theorem \ref{theorem:summary} in Section \ref{sec:theorems}, where we identify sufficient conditions under which the asymptotic behavior of $a^H s(\cdot / a)$ is known at coarse and fine scales. We also investigate the necessity of these conditions. Finally, we apply our results  in Section \ref{sec:examples} to specific classes of random processes, for   different types of white noises and operators.
 
\section{Generalized Random Processes} \label{sec:GRP}

The theory of generalized random processes was introduced independently in the 50's by K. It\^o  \cite{Ito1954distributions} and I. Gelfand \cite{Gelfand1955generalized}. Among the benefits of this theory to  the construction and study of random processes, we mention:
\begin{itemize}
	\item \textit{Its generality.} It allows one to define the broadest class of linear processes, including processes with no pointwise interpretation such as L\'evy white noises.
	\item \textit{The availability of an infinite-dimensional Bochner theorem.} The characteristic functional (see Definition \ref{def:CF}) characterizes the law of a generalized random process in the same way  the characteristic function does for random variables.  This allows for the construction of generalized random processes via their characteristic functional (see Theorem \ref{theo:MinlosBochner} below). 
	\item \textit{The availability of an infinite-dimensional L\'evy continuity theorem.} The convergence in law of a sequence of random vectors is equivalent to the pointwise convergence of the corresponding characteristic functions. This result is also true  for the generalized random processes defined over the extended space of tempered distribution $\S'(\R^d)$ (see Theorem \ref{theo:BoulicautLevy}). This provides a powerful tool to show the convergence in law of random processes. We shall  exploit this tool extensively in this paper. 
	\end{itemize}
	
	\subsection{Generalized Random Processes and their Characteristic Functional} \label{subsec:definition}

The space of rapidly decaying and infinitely differentiable functions is denoted by $\S(\R^d)$ and endowed with its usual Fr\'echet topology. Its continuous dual, the space of tempered distribution, is $\S'(\R^d)$. We fix a probability space $(\Omega, \mathcal{F}, \mathscr{P})$. The space of real-valued  random variables $L^0(\Omega)$ is endowed with the Fr\'echet topology associated with the convergence in probability. 

\begin{definition} \label{def:GRP}
	A \emph{generalized random process} $s$ on $\S'(\R^d)$  is mapping $\varphi \mapsto \langle s , \varphi \rangle$  from $\S(\R^d)$ to $L^0(\Omega)$ that is linear and sequentially continuous in probability.
\end{definition}

The generalized random process $s$  in Definition \ref{def:GRP} is specified as a random linear functional over the space $\S(\R^d)$. 
Alternatively, one can see $s$ as a random variable with values in $\S'(\R^d)$; that is, a measurable map from $\Omega$ to $\S'(\R^d)$, where $\S'(\R^d)$ is endowed with the Borelian $\sigma$-field associated with the strong topology. 
This equivalence is a consequence of the structure of the nuclear Fr\'echet space of $\S(\R^d)$ \cite{Fernique1967processus,Ito1984foundations}.
It means in particular that one should look at $s$ as a random tempered generalized function. 

\begin{definition} \label{def:CF}
	The \emph{characteristic functional} of a generalized random process $s$ is defined  as
$\CF_s(\varphi) = \mathbb{E} \left[ \mathrm{e}^{\mathrm{i} \langle s ,\varphi\rangle} \right]$ for every $\varphi \in \S(\R^d)$.	
\end{definition}

As announced in the Introduction of Section \ref{sec:GRP}, we present the generalizations of the Bochner and L\'evy continuity theorems for generalized random processes.

\begin{theorem}\label{theo:MinlosBochner}
	A functional $\CF : \S(\R^d) \rightarrow \C$ is the characteristic functional of a generalized random process if and only if  it is continuous and positive-definite over $\S(\R^d)$,  and normalized as $\CF(0) = 1$.
\end{theorem}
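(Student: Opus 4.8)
The plan is to prove the two implications separately, the necessity being elementary and the sufficiency being the Bochner--Minlos construction.

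\emph{Necessity.} Suppose $\CF = \CF_s$ for a generalized random process $s$. Normalization is immediate: $\CF_s(0) = \E[\mathrm{e}^{\mathrm{i}\langle s, 0\rangle}] = 1$. For positive-definiteness, take $\varphi_1,\dots,\varphi_n \in \S(\R^d)$ and $z_1,\dots,z_n \in \C$ and compute $\sum_{j,k} z_j \overline{z_k}\, \CF_s(\varphi_j - \varphi_k) = \E\big[ \big| \sum_j z_j \mathrm{e}^{\mathrm{i}\langle s,\varphi_j\rangle} \big|^2 \big] \geq 0$. For continuity, if $\varphi_n \to \varphi$ in $\S(\R^d)$ then $\langle s,\varphi_n\rangle \to \langle s,\varphi\rangle$ in probability by Definition \ref{def:GRP}, so $\mathrm{e}^{\mathrm{i}\langle s,\varphi_n\rangle} \to \mathrm{e}^{\mathrm{i}\langle s,\varphi\rangle}$ in probability; these are bounded by $1$, so dominated convergence gives $\CF_s(\varphi_n) \to \CF_s(\varphi)$, and since $\S(\R^d)$ is metrizable, sequential continuity is continuity.

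\emph{Sufficiency.} Assume $\CF$ is continuous, positive-definite, and $\CF(0)=1$. For every finite tuple $\vec\varphi = (\varphi_1,\dots,\varphi_n)$ the function $\Phi_{\vec\varphi}(t_1,\dots,t_n) = \CF(\sum_j t_j \varphi_j)$ on $\R^n$ is continuous, positive-definite, and equal to $1$ at the origin, so the classical Bochner theorem yields a Borel probability measure $\mu_{\vec\varphi}$ on $\R^n$ with $\widehat{\mu_{\vec\varphi}} = \Phi_{\vec\varphi}$. The family $\{\mu_{\vec\varphi}\}$ is consistent under permutations of coordinates and under deletion of a coordinate, both checked by matching characteristic functions through the definition of $\Phi_{\vec\varphi}$. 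By Kolmogorov's extension theorem there is a probability measure $\PR_0$ on the product space $\R^{\S(\R^d)}$, equipped with its cylinder $\sigma$-field, whose coordinate maps $X_\varphi(\omega) = \omega(\varphi)$ have finite-dimensional laws $\mu_{\vec\varphi}$; in particular $\E_{\PR_0}[\mathrm{e}^{\mathrm{i} X_\varphi}] = \CF(\varphi)$. Next, for fixed $\varphi,\psi \in \S(\R^d)$ and $a,b\in\R$, the characteristic function of $X_{a\varphi+b\psi} - a X_\varphi - b X_\psi$ at a point $r\in\R$ equals $\CF\big(r(a\varphi+b\psi) - ra\varphi - rb\psi\big) = \CF(0) = 1$, so that random variable vanishes $\PR_0$-a.s.; choosing a countable $\mathbb{Q}$-linear subspace $V \subset \S(\R^d)$ dense in $\S(\R^d)$ (possible by separability), we get a $\PR_0$-full event on which $\varphi \mapsto X_\varphi$ is $\mathbb{Q}$-linear on $V$.

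The crux is to upgrade this skeleton functional to an element of $\S'(\R^d)$, i.e.\ to show that $\PR_0$-a.s.\ the map $\varphi\mapsto X_\varphi$ on $V$ is continuous for the topology of $\S(\R^d)$ and hence extends uniquely to $s(\omega)\in\S'(\R^d)$. Here one uses continuity of $\CF$ at $0$ together with the nuclearity of $\S(\R^d)$: pick a continuous Hilbertian seminorm $p$ with $\Re(1-\CF(\varphi)) \leq \eps$ whenever $p(\varphi)\leq 1$, then a larger Hilbertian seminorm $q$ for which the canonical map between the associated Hilbert completions is Hilbert--Schmidt, and run Minlos's estimate to deduce that $\PR_0$-a.s.\ $\sup\{|X_\varphi|/q(\varphi) : \varphi\in V\setminus\{0\}\} < \infty$. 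This is the step I expect to be the main obstacle, being precisely where nuclearity --- not merely the Fr\'echet property --- of $\S(\R^d)$ is indispensable; I would either reproduce Minlos's Hilbert--Schmidt argument or invoke it from \cite{Gelfand1955generalized,Ito1984foundations,Fernique1967processus}. It then remains to conclude: setting $\langle s,\varphi\rangle := X_\varphi$ on $V$ and extending by continuity, the map $\omega\mapsto s(\omega)$ is measurable into $\S'(\R^d)$ (the strong Borel $\sigma$-field agreeing with the cylinder one by separability), $\langle s,\varphi\rangle = X_\varphi$ $\PR_0$-a.s.\ for every $\varphi\in\S(\R^d)$ by density, the map $\varphi\mapsto\langle s,\varphi\rangle$ is linear, and it is continuous in probability since $\varphi_n\to\varphi$ in $\S(\R^d)$ forces $\CF(\varphi_n-\varphi)\to 1$, i.e.\ $\langle s,\varphi_n-\varphi\rangle\to 0$ in probability. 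Thus $s$ is a generalized random process, and $\CF_s(\varphi) = \E_{\PR_0}[\mathrm{e}^{\mathrm{i} X_\varphi}] = \CF(\varphi)$, completing the proof.
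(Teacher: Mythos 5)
The paper does not actually prove this statement: it records it as the classical Minlos--Bochner theorem and points to \cite{Minlos1959generalized}, \cite[Theorem II.3.3]{Fernique1967processus}, and the self-contained exposition in \cite{Bierme2017generalized}. So there is no in-paper argument to match; what you have written is the standard proof skeleton, and it is correct as far as it goes. Your necessity direction is complete (the positive-definiteness identity, the bounded-convergence argument for continuity, and the reduction of continuity to sequential continuity via metrizability of the Fr\'echet space $\S(\R^d)$ are all fine). Your sufficiency direction follows the canonical route --- finite-dimensional Bochner, Kolmogorov consistency, almost-sure $\mathbb{Q}$-linearity on a countable dense subspace, then the upgrade to a genuine $\S'(\R^d)$-valued variable --- and you correctly isolate the one step that carries all the weight: the Minlos estimate, where continuity of $\CF$ at $0$ is combined with a Hilbert--Schmidt embedding between Hilbertian seminorms supplied by nuclearity to show the limit functional is almost surely $q$-bounded. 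You do not carry that estimate out, but you name it precisely, explain why mere Fr\'echet structure would not suffice, and defer it to the same sources the paper cites; given that the paper itself offers only a citation, this is an acceptable (indeed more informative) level of detail. The remaining glue --- identification of the cylindrical and strong Borel $\sigma$-fields on $\S'(\R^d)$, and continuity in probability via $\CF(t(\varphi_n-\varphi))\to 1$ together with L\'evy's continuity theorem for real random variables --- is also correctly handled. In short: no gap beyond the explicitly flagged and standard one, and your write-up supplies the architecture that the paper delegates entirely to the literature.
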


This result  is known as the Minlos-Bochner theorem. It is valid  for any functional defined over a nuclear Fr\'echet space such as $\S(\R^d)$ \cite{Minlos1959generalized}, and more generally over an inductive limit of nuclear Fr\'echet spaces such as $\D(\R^d)$, the space of compactly supported smooth functions \cite[Theorem II.3.3]{Fernique1967processus}. The nuclear structure of $\S(\R^d)$ is at the heart of the theory of generalized random processes.

\begin{definition}
	A sequence of generalized random processes $(s_n)$ converges in law to the generalized process $s$ if, for any $\varphi_1, \ldots ,\varphi_N \in \S(\R^d)$, the sequence of  random vectors  $  ( \langle s_n , \varphi_1 \rangle , \ldots , \langle s_n , \varphi_N \rangle )  $ converges in law to the random vector $ ( \langle s , \varphi_1 \rangle , \ldots , \langle s , \varphi_N \rangle ) $.
\end{definition}

\begin{theorem}\label{theo:BoulicautLevy}
	A sequence of generalized random processes $(s_n)$ converges in law to the generalized random process $s$ if and only if 
$\CF_{s_n}(\varphi) \underset{n \rightarrow \infty}{\longrightarrow} \CF_s(\varphi)$
for any $\varphi \in \S(\R^d)$. 
\end{theorem}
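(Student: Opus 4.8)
The plan is to prove Theorem~\ref{theo:BoulicautLevy}, the infinite-dimensional L\'evy continuity theorem on $\S'(\R^d)$, by combining two ingredients: the finite-dimensional L\'evy continuity theorem for random vectors, and a tightness/equicontinuity argument that upgrades pointwise convergence of the characteristic functionals to convergence in law. The ``only if'' direction is immediate: if $s_n \to s$ in law, then for each fixed $\varphi$ the scalar random variables $\langle s_n,\varphi\rangle$ converge in law to $\langle s,\varphi\rangle$, hence their characteristic functions $\CF_{s_n}(\varphi) = \E[\ue^{\ui\langle s_n,\varphi\rangle}]$ converge to $\CF_s(\varphi)$ by the classical L\'evy continuity theorem in dimension one.

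The substance is the ``if'' direction. First I would fix an arbitrary finite collection $\varphi_1,\dots,\varphi_N \in \S(\R^d)$ and consider the random vectors $V_n = (\langle s_n,\varphi_1\rangle,\dots,\langle s_n,\varphi_N\rangle) \in \R^N$. Their joint characteristic function at $\bm{\xi}=(\xi_1,\dots,\xi_N)\in\R^N$ is $\E[\ue^{\ui\sum_k \xi_k\langle s_n,\varphi_k\rangle}] = \CF_{s_n}(\sum_k \xi_k\varphi_k)$, since the pairing is linear in the test function. By hypothesis this converges, as $n\to\infty$, to $\CF_s(\sum_k\xi_k\varphi_k)$, which is the joint characteristic function of $(\langle s,\varphi_1\rangle,\dots,\langle s,\varphi_N\rangle)$. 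To invoke the classical (multivariate) L\'evy continuity theorem and conclude that $V_n$ converges in law to $(\langle s,\varphi_k\rangle)_k$, I need the limiting function $\bm{\xi}\mapsto\CF_s(\sum_k\xi_k\varphi_k)$ to be continuous at $\bm{\xi}=0$; this holds because $\CF_s$ is a characteristic functional, hence continuous on $\S(\R^d)$ by Theorem~\ref{theo:MinlosBochner}, and $\bm{\xi}\mapsto\sum_k\xi_k\varphi_k$ is continuous from $\R^N$ into $\S(\R^d)$. Since $\varphi_1,\dots,\varphi_N$ were arbitrary, this gives exactly the definition of convergence in law of $(s_n)$ to $s$, and no separate tightness argument on $\S'(\R^d)$ is actually needed once convergence in law has been \emph{defined} finite-dimensionally as in the preceding definition.

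The main subtlety I would be careful about is the following: the Definition of convergence in law above is stated purely in terms of finite-dimensional marginals $(\langle s_n,\varphi_1\rangle,\dots,\langle s_n,\varphi_N\rangle)$, so the theorem reduces, essentially formally, to the finite-dimensional L\'evy continuity theorem applied slice by slice; the genuine functional-analytic content (that a limit functional which is merely a pointwise limit is automatically a bona fide characteristic functional, and that the limiting object lives on $\S'(\R^d)$) is carried by the Minlos--Bochner theorem and by the nuclearity of $\S(\R^d)$, which we are entitled to assume. Thus the hard part is not a hard computation but rather making sure the hypotheses of the scalar theorem are met uniformly in the finite-dimensional reduction, specifically the continuity-at-the-origin of the limit and the fact that the pointwise limit $\CF_s$ is a legitimate characteristic functional. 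A complete proof on the larger space $\S'(\R^d)$ in the literature (for instance Boulicaut \cite{Fernique1967processus}) additionally establishes uniform tightness of the laws on $\S'(\R^d)$ from equicontinuity of the family $\{\CF_{s_n}\}$; I would cite that refinement for the statement about convergence of the $\S'(\R^d)$-valued random variables themselves, but for the finite-dimensional formulation adopted here the argument above suffices.
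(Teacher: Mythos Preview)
The paper does not supply its own proof of Theorem~\ref{theo:BoulicautLevy}; it only cites Fernique, Boulicaut, and Bierm\'e--Durieu for the result and moves on. So there is no ``paper's proof'' to match step by step. That said, your argument is correct and, in fact, constitutes a complete proof of the theorem \emph{as formulated here}.

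The point you correctly isolate is that the paper's Definition of convergence in law is purely finite-dimensional (convergence in law of all random vectors $(\langle s_n,\varphi_1\rangle,\ldots,\langle s_n,\varphi_N\rangle)$), not weak convergence of probability measures on $\S'(\R^d)$. Under that definition, the equivalence with pointwise convergence of characteristic functionals is essentially a restatement of the classical multivariate L\'evy continuity theorem, once one observes that the joint characteristic function of $(\langle s_n,\varphi_k\rangle)_k$ at $\bm{\xi}$ is $\CF_{s_n}(\sum_k\xi_k\varphi_k)$ and that continuity of the limit at $\bm{\xi}=0$ is guaranteed because $s$ is already assumed to be a generalized random process (so $\CF_s$ is continuous on $\S(\R^d)$). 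Your treatment of both directions is clean and there is no gap.

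Where your write-up differs from the cited literature is in scope rather than correctness. Fernique and Boulicaut prove the stronger statement that pointwise convergence of characteristic functionals on a nuclear Fr\'echet space implies weak convergence of the induced probability measures on the dual; this genuinely requires nuclearity, via a uniform tightness argument on $\S'(\R^d)$. You rightly flag this as the ``real'' functional-analytic content and note that it is not needed for the finite-dimensional formulation adopted in the paper. That distinction is well observed. One small cosmetic point: you mention Theorem~\ref{theo:MinlosBochner} to justify continuity of $\CF_s$, but strictly speaking the continuity of $\CF_s$ follows already from $s$ being, by hypothesis, a generalized random process in the sense of Definition~\ref{def:GRP} (sequential continuity of $\varphi\mapsto\langle s,\varphi\rangle$ plus dominated convergence); Minlos--Bochner is the converse direction and is not needed at that step.
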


This result was proved by X. Fernique for the space of compactly supported smooth functions $\D(\R^d)$ \cite[Theorem III.6.5]{Fernique1967processus}. 
The case of  a nuclear Fr\'echet space (including $\S(\R^d)$) is simpler and can be deduced from the results of Fernique. It is also proved by P. Boulicaut in \cite{Boulicaut1973} together with a converse to this result: The weak convergence of probability measures on the dual of a Fr\'echet space $\mathcal{F}$ is equivalent with the pointwise convergence of the characteristic functionals on $\mathcal{F}$ if and only if $\mathcal{F}$ is nuclear \cite[Theorem 5.3]{Boulicaut1973}.
A comprehensive and self-contained exposition of the theory of generalized random processes, including proofs of Theorems \ref{theo:MinlosBochner} and \ref{theo:BoulicautLevy}, can be found in \cite{Bierme2017generalized}. 

	\subsection{L\'evy White Noises and Infinite Divisibility}

The construction of continuous-domain L\'evy white noises and related processes  is intimately linked with the infinite divisibility of the finite-dimensional marginals of those processes \cite{GelVil4,Sato1994levy}. 
A random variable (and more generally a random vector) is \emph{infinitely divisible} if it can be decomposed (in law) as the sum of $N$ i.i.d. random variables for every $N$. The logarithm of the characteristic function $\CF_X$ of an infinitely divisible random variable $X$ is called its \emph{L\'evy exponent}, denoted by $\Psi$; \emph{i.e.}, 
$\CF_{X}(\xi) = \exp (\Psi (\xi) ).$

Initially, the family of L\'evy white noises was introduced  over the space $\D'(\R^d)$ of generalized functions \cite[Chapter III]{GelVil4}. 
There is  a one-to-one correspondence between infinitely divisible random variables and L\'evy white noises in $\D'(\R^d)$.
Indeed, the characteristic functional of a L\'evy white noise is of the form $\CF_w(\varphi) = \exp( \int_{\R^d} \Psi(\varphi(\bm{x})) \mathrm{d}\bm{x} )$ where $\Psi$ is a L\'evy exponent. 
However, a L\'evy white noise on $\D'(\R^d)$ is not necessarily tempered (a counter-example is given in \cite[Section 3.1]{Fageot2014}).The characterization of tempered white noises has been obtained recently and is based on Theorem \ref{theo:CFonS}.

\begin{theorem} \label{theo:CFonS}
Let $\Psi$ be a continuous function from $\R$ to $\C$. 
The functional 
\begin{equation} \label{eq:formCF}
\CF(\varphi) = \exp \left( \int_{\R^d} \Psi(\varphi(\bm{x})) \drm \bm{x} \right)
\end{equation}
is the characteristic functional of a generalized random process in $\S'(\R^d)$ if and only if  $\Psi$ is a L\'evy exponent and the infinitely divisible random variable $X$ with L\'evy exponent $\Psi$ has a finite $\epsilon$-moment for some $\epsilon > 0$, such that
$ \mathbb{E} \left[ \abs{X}^\epsilon \right] < \infty .$
\end{theorem}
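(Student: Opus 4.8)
The plan is to derive both implications from the Minlos--Bochner theorem (Theorem~\ref{theo:MinlosBochner}): since a L\'evy exponent $\Psi$ satisfies $\Psi(0)=0$, the functional in~\eqref{eq:formCF} is automatically normalized by $\CF(0)=1$, so everything reduces to deciding when the right-hand side of~\eqref{eq:formCF} is well defined, continuous and positive-definite on $\S(\R^d)$.

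For the \emph{sufficiency}, suppose $\Psi$ is a L\'evy exponent and $\E[\abs X^\epsilon]<\infty$; since all smaller absolute moments are then finite, we may take $\epsilon\in(0,1)$. The key ingredient is the a priori bound $\abs{\Psi(\xi)}\leq C(\abs\xi^{\epsilon}+\xi^{2})$ for all $\xi\in\R$, obtained by splitting the L\'evy--Khintchine representation of $\Psi$ into its Gaussian part, its small-jump part (dominated by $\xi^{2}y^{2}$) and its large-jump part (dominated by $\min(2,\abs{\xi y})\leq 2^{1-\epsilon}\abs{\xi y}^{\epsilon}$), using the equivalence of $\E[\abs X^\epsilon]<\infty$ with $\int_{\abs y>1}\abs y^{\epsilon}\,\nu(\drm y)<\infty$ \cite{Sato1994levy}. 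Because every $\varphi\in\S(\R^d)$ satisfies $\int_{\R^d}(\abs{\varphi(\bm x)}^{\epsilon}+\varphi(\bm x)^{2})\,\drm\bm x<\infty$ by rapid decay, this bound makes $\int_{\R^d}\Psi(\varphi(\bm x))\,\drm\bm x$ absolutely convergent, so $\CF$ is well defined; combined with dominated convergence --- a convergent sequence in $\S(\R^d)$ is uniformly dominated by a single Schwartz majorant --- it also yields the continuity of $\varphi\mapsto\int_{\R^d}\Psi(\varphi(\bm x))\,\drm\bm x$, hence of $\CF$, on $\S(\R^d)$. For positive-definiteness I would use the classical fact that~\eqref{eq:formCF} defines a L\'evy white noise on the larger test space $\D'(\R^d)$ \cite{GelVil4}: $\CF$ is positive-definite on $\D(\R^d)$, which is dense in $\S(\R^d)$, so positive-definiteness extends to $\S(\R^d)$ by the continuity just established. (Alternatively, one checks it directly by approximating $\int_{\R^d}\Psi(\varphi)$ by Riemann sums, so that the partial products become characteristic functions of finite sums of independent infinitely divisible variables.) Minlos--Bochner then produces a generalized random process with characteristic functional $\CF$.

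For the \emph{necessity}, assume $\CF$ is the characteristic functional of a generalized random process $w$ on $\S'(\R^d)$. Restricting to the continuously and densely embedded subspace $\D(\R^d)$, $\CF$ is still continuous, positive-definite and normalized, so it defines a process on $\D'(\R^d)$; by the $\D'$-theory of L\'evy white noises \cite{GelVil4} this forces $\Psi$ to be a L\'evy exponent. It remains to prove $\E[\abs X^\epsilon]<\infty$ for some $\epsilon>0$, and I would argue by contraposition: suppose $\int_{\abs y>1}\abs y^{\epsilon}\,\nu(\drm y)=\infty$ for every $\epsilon>0$. Then $\nu\neq0$ and the tail $\bar\nu(r):=\nu(\{\abs y>r\})$ decays slower than any negative power of $r$, which (by a construction matched to $\bar\nu$) lets us exhibit a positive, radial, decreasing $\varphi_0\in\S(\R^d)$ of the form $\varphi_0(\bm x)=\exp(-\psi(\abs{\bm x}))$, with $\psi(r)/\log r\to\infty$ but $\psi$ growing slowly, for which $\int_{\R^d}\bar\nu(2/\varphi_0(\bm x))\,\drm\bm x=\infty$. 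With $h(\xi):=-\mathrm{Re}\,\Psi(\xi)=\tfrac{\sigma^{2}}{2}\xi^{2}+\int_{\Rstar}(1-\cos\xi y)\,\nu(\drm y)\geq0$, integrating in $t$ and using $1-\tfrac{\sin u}{u}\geq\tfrac12$ for $\abs u\geq2$ yields
\[
\int_{0}^{1}\!\!\int_{\R^d} h\bigl(t\,\varphi_0(\bm x)\bigr)\,\drm\bm x\,\drm t\;\geq\;\tfrac12\int_{\R^d}\bar\nu\bigl(2/\varphi_0(\bm x)\bigr)\,\drm\bm x\;=\;+\infty ,
\]
so $t\mapsto\int_{\R^d}h(t\varphi_0(\bm x))\,\drm\bm x=-\log\abs{\CF(t\varphi_0)}$ is unbounded on $(0,1)$. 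Hence $\abs{\CF(t_n\varphi_0)}\to0$ for some $t_n\to t^\ast\in[0,1]$, so $\CF(t^\ast\varphi_0)=0$ by continuity of the characteristic function $t\mapsto\CF(t\varphi_0)=\E[\ue^{\ui t\langle w,\varphi_0\rangle}]$ of the real variable $\langle w,\varphi_0\rangle$. But $\langle w,\varphi_0\rangle$ is infinitely divisible, being a limit in probability of the infinitely divisible variables $\langle w,\chi_n\varphi_0\rangle$ with $\chi_n\varphi_0\in\D(\R^d)$, so its characteristic function never vanishes --- a contradiction.

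The delicate part of this scheme is the necessity of the moment condition. The oscillation of $1-\cos$ prevents a pointwise lower bound of the shape $h(\xi)\gtrsim\bar\nu(c/\xi)$ and forces the averaging step above; and one must verify that every admissible heavy tail $\bar\nu$ genuinely admits a Schwartz test function $\varphi_0$ with $\int_{\R^d}\bar\nu(2/\varphi_0(\bm x))\,\drm\bm x=\infty$, which is where the profile $\psi$ --- growing faster than $\log r$ yet as slowly as required --- has to be calibrated to $\bar\nu$. By comparison the sufficiency is routine, amounting to L\'evy--Khintchine estimates feeding into Minlos--Bochner.
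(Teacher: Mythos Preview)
The paper does not actually prove Theorem~\ref{theo:CFonS}: immediately after the statement it attributes the sufficiency to \cite[Theorem~3]{Fageot2014} and the necessity to \cite[Theorem~3.13]{Dalang2015Levy}. So there is no in-paper argument to compare against; your proposal is an attempt to reconstruct proofs that the paper outsources.

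Your sufficiency argument is correct and is exactly the mechanism the paper relies on elsewhere: the bound $\abs{\Psi(\xi)}\le C(\abs{\xi}^{\epsilon}+\xi^{2})$ is precisely the estimate invoked (with a citation to \cite{Fageot2014}) in the proof of Proposition~\ref{prop:Rcontinuous}, and feeding it into Minlos--Bochner via dominated convergence and the density of $\D(\R^d)$ in $\S(\R^d)$ is the standard route.

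For the necessity, your overall architecture is sound: once $\Psi$ is identified as a L\'evy exponent, the averaging trick $\int_0^1 h(t\varphi_0)\,\drm t\ge\tfrac12\,\bar\nu(2/\varphi_0)$ is correct, and the contradiction via non-vanishing of infinitely divisible characteristic functions (with the $t^\ast=0$ case handled by continuity of $\CF$ at the origin) closes the loop. The genuine gap is the one you flag yourself: you have not shown that for \emph{every} L\'evy measure with $\int_{\abs{y}>1}\abs{y}^{\epsilon}\,\nu(\drm y)=\infty$ for all $\epsilon>0$ one can manufacture a Schwartz function $\varphi_0>0$ with $\int_{\R^d}\bar\nu(2/\varphi_0(\bm x))\,\drm\bm x=\infty$. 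This is not automatic --- choosing $\varphi_0$ Gaussian can fail in low dimension even for tails as heavy as $\bar\nu(r)\asymp(\log r)^{-2}$ --- and requires calibrating the profile $\psi$ to $\bar\nu$ (e.g.\ taking $\psi$ barely super-logarithmic, with derivative control to stay in $\S$). Until that construction is carried out, the necessity direction remains a plausible strategy rather than a proof; completing it essentially amounts to redoing the core of the Dalang--Humeau argument.
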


We have shown that the existence of a finite absolute moment is sufficient for $w$ being tempered  in \cite[Theorem 3]{Fageot2014}. More recently, R. Dalang and T. Humeau have proved that this condition is also necessary \cite[Theorem 3.13]{Dalang2015Levy}.
This provides a one-to-one correspondence between tempered L\'evy noise and infinitely divisible random variable having a finite absolute moment, what justifies the following definition.

\begin{definition}
	A generalized random process $w$ whose characteristic functional has the form \eqref{eq:formCF} with $\Psi$ satisfying the two conditions of Theorem \ref{theo:CFonS} is called a \emph{L\'evy white noise} in $\S'(\R^d)$.
\end{definition}

The finiteness of absolute moments is strongly related to the behavior of the L\'evy exponent at the origin, or equivalently, the asymptotic behavior of the L\'evy measure $\mu$ associated to $\Psi$ (see Section \ref{subsec:examplesnoises} for a short reminder on (symmetric) L\'evy measures). Especially, the condition $\mathbb{E} \left[ \abs{X}^p \right] < \infty$ is equivalent to $\int_{\lvert t \rvert\geq 1} \lvert t \rvert^p \mu(\mathrm{d}t)< \infty$ for every $p>0$ \cite[Section 5.25]{Sato1994levy}. The point in Theorem \ref{theo:CFonS}  is that $\epsilon$ can be arbitrarily small, hence this requirement for being tempered is extremely mild and satisfied by any L\'evy noise encountered in practice.

A L\'evy white noise is stationary in the sense that $w(\cdot - \bm{x}_0)$ and $w$ have the same law for every $\bm{x}_0 \in \R^d$. It is moreover independent at every point, meaning that $\langle w ,\varphi_1 \rangle $ and $\langle w , \varphi_2 \rangle$ are independent whenever $\varphi_1$ and $\varphi_2$ have disjoint supports.

As we shall see, one particular subclass of L\'evy white noise plays a crucial role as potential scaling limits of general L\'evy white noises: the S$\alpha$S (symmetric-$\alpha$-stable) white noises.

\begin{definition}
	Let $0< \alpha \leq 2$. A L\'evy white noise $w_\alpha$ is a \emph{S$\alpha$S white noise} if its characteristic functional has the form
	\begin{equation} \label{eq:SaSCF}
		\CF_{w_\alpha} (\varphi) = \exp ( - C \lVert \varphi \rVert_\alpha^{\alpha})
		\end{equation}
		for some $C>0$ and every $\varphi \in \S(\R^d)$, where $\lVert \varphi \rVert_\alpha = \left( \int_{\R^d} \lvert \varphi (\bm{x}) \rvert^\alpha \mathrm{d} \bm{x} \right)^{1/\alpha}$.
\end{definition}

The functional \eqref{eq:SaSCF} is a characteristic functional and corresponds to \eqref{eq:formCF} with L\'evy exponent $\Psi(\xi) = - C \abs{\xi}^\alpha$.
For every $\varphi \in \S(\R^d)$, the random variable $X = \langle w_{\alpha}, \varphi \rangle$ is   S$\alpha$S with characteristic function $\CF_X (\xi) = \exp( - C \lVert \varphi \rVert_\alpha^\alpha \abs{\xi}^\alpha)$. For $\alpha = 2$, one recognizes the Gaussian law. When $\alpha<2$, by contrast, the considered random variables have infinite variances. More information on non-Gaussian S$\alpha$S random variables and processes can be found on \cite{Taqqu1994stable}. 

	\subsection{Indices of L\'evy White Noises} \label{subsec:blumenthal}

\begin{definition}  \label{def:BG}
We consider the following quantities associated to a L\'evy exponent $\Psi$: 
		\begin{align}
			\beta_0 &= \sup \left\{ p \in [0,2], \quad   \underset{|\xi| \rightarrow 0}{\lim \sup}  \frac{\abs{\Psi(\xi)}}{|\xi |^p} < \infty \right\},  \label{eq:b0} \\
			\beta_\infty &= \inf \left\{ p \in [0,2], \quad  \underset{|\xi| \rightarrow \infty}{\lim \sup}  \frac{\abs{\Psi(\xi)}}{|\xi |^p} < \infty  \right\}. \label{eq:binfty}
		\end{align}
	We call $\beta_0$ the \emph{Pruitt index} and $\beta_\infty$ the \emph{Blumenthal-Getoor index} of $\Psi$. 
\end{definition}

The Blumenthal-Getoor index $\beta_\infty$ was initially introduced   in \cite{Blumenthal1961sample} to study the   behavior of L\'evy processes at the origin.  
It is connected to the local regularity of L\'evy processes \cite{Fageot2017besov}, and more generally of Feller processes \cite{Schilling1998growth,Schilling2000function}. 
 The index $\beta_0$ is the asymptotic counterpart of $\beta_\infty$ in the sense that it relies to the behavior of L\'evy processes at infinity. 
 It was considered by B. Pruitt \cite{Pruitt1981growth} and is highly connected to the existence of moments of the infinitely divisible random variable with L\'evy exponent $\Psi$. 
 Many global regularity properties of L\'evy \cite{Fageot2017multidimensional} and Feller processes \cite{Schilling2000function} are captured by the knowledge of $\beta_0$ and $\beta_\infty$. 
Moreover, one can often characterized $\beta_0$ with the L\'evy measure associated to $\Psi$ (see \cite[Section 3]{Deng2015shift}). 
The fact that  a L\'evy white noise on $\S'(\R^d)$ always has a finite  moment $\epsilon>0$ finite (see Theorem \ref{theo:CFonS})  imposes that $\beta_0 > 0$. Consequently,  we should always consider indices such that  $0<\beta_0 \leq 2$.

Consider a L\'evy exponent $\Psi$  with  indices $0<\beta_0, \beta_\infty \leq 2$ and fix $0<p \leq2$. The notation $f(\xi) \underset{0/\infty}{\sim} g(\xi)$ means that $g(\xi) \neq 0$ for $\xi\neq 0$ and that $f(\xi) / g(\xi)$ converges to $1$ when $\xi$ goes to $0/\infty$.  From the definition, we easily see if $\Psi (\xi) \underset{\infty}{\sim} - C \abs{\xi}^p$, then $\beta_\infty = p$. Similarly, if $\Psi (\xi) \underset{0}{\sim} - C \abs{\xi}^p$, then $\beta_0 = p$. Therefore, the indices $\beta_0$ and $\beta_\infty$ are respectively the only possible power law behavior of the L\'evy exponent in the origin or asymptotically, respectively. Finally, if the L\'evy exponent is bounded as $\abs{\Psi(\xi)} \leq C \abs{\xi}^p$, then $\beta_\infty \leq p \leq \beta_0$.

\section{Linear SDE Driven by L\'evy White Noises} \label{sec:SDE}

The main goal of this section is to introduce the class of random processes of interest for the study of the local and asymptotic self-similarity.
A linear differential operator $\Lop$ and a white noise in $\S'(\R^d)$ being given, we consider the linear stochastic differential equation
\begin{equation} \label{eq:Lsw}
	\Lop s = w.
\end{equation}
We say that a solution exists if there is a generalized random process $s$ in $\S'(\R^d)$ such that the processes $\Lop s$ and $w$ are equal in law (or equivalently, the same characteristic functional).
The general framework to solve \eqref{eq:Lsw}  is  based on the existence of inverse operators with adequate properties   \cite[Chapter 4]{Unser2014sparse}. 
In this section,  we first construct  generalized random processes that are  solutions of \eqref{eq:Lsw} (Section \ref{subsec:principle}). Then, we introduce the homogeneous operators and the class of studied processes, called $\gamma$-order linear processes in Section \ref{subsec:operators}. 

	\subsection{Construction of Linear Processes} \label{subsec:principle}

Let  $\Lop$ be  a continuous and linear operator from $\S(\R^d)$ to $\S'(\R^d)$. Then, its adjoint   is the operator $\Lop^*$ from $\S(\R^d)$ to $\S'(\R^d)$ defined as
$	\langle \Lop^* \varphi_1 , \varphi_2 \rangle = \langle \varphi_1 , \Lop \varphi_2 \rangle
$ for every $\varphi_1,\varphi_2\in \S(\R^d)$.

\begin{proposition}[Specification of a linear process] \label{prop:principle}
Consider a linear and continuous operator $\Lop$ from $\S(\R^d)$ to $\S'(\R^d)$ and a L\'evy white noise $w$ on $\S'(\R^d)$.
Assume the existence of a topological vector space $\mathcal{X}$ such that 
\begin{itemize}
	\item the adjoint $\Lop^*$ of $\Lop$ admits a left inverse operator $\Top$ that is linear and continuous from $\S(\R^d)$ to $\mathcal{X}$;
	\item the characteristic functional $\CF_w$ of $w$ can be extended as a continuous and positive-definite functional on $\mathcal{X}$.
\end{itemize}
Then, there exists a generalized random process $s$ whose characteristic functional is, for every   $\varphi \in \S(\R^d)$,
$\CF_s(\varphi) = \CF_w(\Top \varphi)$. Moreover, we have that
$\Lop s \overset{(d)}{=} w$.
\end{proposition}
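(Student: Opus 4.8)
The plan is to construct $s$ explicitly by setting $s = \Top^* w$ in an appropriate sense and verifying that its characteristic functional is the claimed one via the Minlos--Bochner theorem (Theorem \ref{theo:MinlosBochner}). Concretely, I would define the candidate functional $\CF_s \colon \S(\R^d) \to \C$ by $\CF_s(\varphi) = \CF_w(\Top \varphi)$ and check the three hypotheses of Theorem \ref{theo:MinlosBochner}: normalization, continuity, and positive-definiteness. Normalization is immediate since $\Top$ is linear, hence $\Top 0 = 0$, so $\CF_s(0) = \CF_w(0) = 1$. Continuity of $\CF_s$ on $\S(\R^d)$ follows by composing the continuity of $\Top \colon \S(\R^d) \to \mathcal{X}$ (first bullet) with the continuity of the extension of $\CF_w$ to $\mathcal{X}$ (second bullet). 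Positive-definiteness is inherited the same way: for any $\varphi_1, \dots, \varphi_N \in \S(\R^d)$ and $z_1, \dots, z_N \in \C$, the matrix with entries $\CF_w(\Top \varphi_j - \Top \varphi_k) = \CF_w(\Top(\varphi_j - \varphi_k))$ is positive semi-definite because $\CF_w$ is positive-definite on $\mathcal{X}$ and $\Top$ is linear so $\Top \varphi_j - \Top \varphi_k = \Top(\varphi_j - \varphi_k)$ lands in $\mathcal{X}$. Thus Theorem \ref{theo:MinlosBochner} yields a generalized random process $s$ on $\S'(\R^d)$ with $\CF_s(\varphi) = \CF_w(\Top \varphi)$.

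It remains to show $\Lop s \overset{(d)}{=} w$, which amounts to proving that the characteristic functional of $\Lop s$ equals that of $w$. Here I would use that $\Lop s$, as a generalized random process, is specified by $\langle \Lop s, \varphi \rangle = \langle s, \Lop^* \varphi \rangle$, so its characteristic functional is $\CF_{\Lop s}(\varphi) = \expect{\ue^{\ui \langle s, \Lop^* \varphi \rangle}} = \CF_s(\Lop^* \varphi) = \CF_w(\Top \Lop^* \varphi)$. Now invoke the defining property of $\Top$ as a left inverse of $\Lop^*$: $\Top \Lop^* \varphi = \varphi$ for every $\varphi \in \S(\R^d)$. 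Hence $\CF_{\Lop s}(\varphi) = \CF_w(\varphi)$ for all $\varphi \in \S(\R^d)$, and since the characteristic functional determines the law of a generalized random process (the Bochner direction of Theorem \ref{theo:MinlosBochner}, or equivalently uniqueness in Theorem \ref{theo:BoulicautLevy}), we conclude $\Lop s \overset{(d)}{=} w$.

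The one subtle point that needs care — and the place I expect most of the real work to be — is that the composition $\CF_w \circ \Top$ must genuinely make sense as a functional on all of $\S(\R^d)$, and that $\Lop s$ is well-defined, i.e.\ that $\Lop^* \varphi \in \S(\R^d)$ (or at least in the domain of $s$) so that $\langle s, \Lop^* \varphi \rangle$ is a bona fide random variable; this is guaranteed by the hypothesis that $\Lop$ (hence $\Lop^*$) maps $\S(\R^d)$ continuously into $\S'(\R^d)$, but one should be slightly careful about whether $s$, built as a process on $\S'(\R^d)$, can be tested against $\Lop^* \varphi$ when $\Lop^* \varphi$ is only a tempered distribution rather than a Schwartz function. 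In the intended applications $\Lop^*$ preserves $\S(\R^d)$, so this is harmless, but a clean statement would either restrict to that case or note that $\langle s, \cdot \rangle$ extends to the relevant space. Modulo this bookkeeping, the proof is a two-line computation built on the Minlos--Bochner theorem and the left-inverse identity $\Top \Lop^* = \mathrm{Id}$.
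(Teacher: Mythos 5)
Your proof is correct and follows essentially the same route as the paper: define $\CF_s = \CF_w \circ \Top$, verify normalization, continuity, and positive-definiteness by composition, invoke Minlos--Bochner, and then use the left-inverse identity $\Top\Lop^*\varphi = \varphi$ to get $\CF_{\Lop s} = \CF_w$. The bookkeeping point you raise about $\Lop^*\varphi$ being a priori only a tempered distribution is a fair observation the paper glosses over, but it is resolved implicitly since the hypothesis that $\Top\Lop^* = \mathrm{Id}$ on $\S(\R^d)$ presupposes $\Lop^*\varphi$ lies in the domain of $\Top$.
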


By considering a more general $\mathcal{X}$, this result refines the original theorem that was first presented  in  \cite[Section 3.5.4]{Unser2014sparse} albeit with some unnecessary restrictions on $\mathcal{X}$. The principle is simply to check the conditions of the Minlos-Bochner theorem. We give the proof for the sake of completeness.

\begin{proof}
Set $\CF(\varphi) = \CF_w(\Top \varphi)$. From the assumption on $\CF_w$ and $\Top$, we easily deduce that $\CF$ is well-defined and continuous over $\S(\R^d)$ by composition. It is positive-definite by composition of  linear and a positive-definite functions. Finally, $\Top \{0\} = 0$, hence $\CF(0) =  \CF_w(0) = 1$. Therefore, $\CF$ is the characteristic functional of a generalized random process $s$ according to Theorem \ref{theo:MinlosBochner}.  
For the last point, we simply remark that, $\Top$ being a left inverse of $\Lop^*$,
$	\CF_{\Lop s} (\varphi) = \mathbb{E}\left[  \mathrm{e}^{\mathrm{i} \langle \Lop s , \varphi \rangle } \right] =  \mathbb{E}\left[  \mathrm{e}^{\mathrm{i} \langle s , \Lop^*  \varphi   \rangle } \right] = \CF_w( \Top  \Lop^* \varphi )  = \CF_w(\varphi),
$ which is equivalent to  $\Lop s \overset{(d)}{=} w$. 
\end{proof}

\begin{definition}
	A generalized random process constructed via Proposition \ref{prop:principle} is called a \emph{linear process}.
\end{definition}

In practice, for given $\Lop$ and $w$, one has to determine an adequate space $\mathcal{X}$ in order to correctly define the process $s$. The choice of $\mathcal{X}$ is generally driven by the white noise. For instance, we will consider the case $\mathcal{X} = L^\alpha(\R^d)$ when dealing with S$\alpha$S white noises. The optimal choice of $\mathcal{X}$ for a given $w$ is investigated in \cite{Fageot2017unified} based on the results of \cite{Rajput1989spectral}.
Then, the main issue becomes the existence of a left inverse with the adequate stability, mapping $\S(\R^d)$ into $\mathcal{X}$.

	\subsection{Homogeneous  Operators and $\gamma$-order Linear Processes} \label{subsec:operators}

This section is dedicated to the specification of random processes concerned by Theorem \ref{theorem:summary}. 
We start with some definitions. 
For $u \in \S'(\R^d)$, $\bm{x}_0 \in \R^d$, and $a>0$, we define $u(\cdot - \bm{x}_0)$ as the tempered distribution such that $\langle u(\cdot - \bm{x}_0) ,\varphi \rangle = \langle u ,\varphi  (\cdot + \bm{x}_0)\rangle$ for every $\varphi \in \S(\R^d)$. Similarly, $u( \cdot / a)$ is the tempered distribution such that $\langle u(\cdot /a ) ,\varphi \rangle = \langle u , a^d \varphi  (a \cdot )\rangle$.

\begin{definition}
	Consider a linear and continuous operator $\Lop$ from $\S(\R^d)$ to $\S'(\R^d)$. We say that $\Lop$ is \emph{$\gamma$-homogeneous} for some $\gamma \in \R$ if $\Lop \{ \varphi ( \cdot / a ) \} = a^{-\gamma} ( \Lop \varphi ) (\cdot / a)$ 
	for every $\varphi \in \S(\R^d)$ and $a > 0$. 
\end{definition}

For instance, the derivative is a $1$-homogeneous operator.
We shall now focus on operators $\Lop$ that are: (1) linear shift-invariant, (2) continuous from $\S(\R^d)$ to $\S'(\R^d)$, and (3)  $\gamma$-homogeneous for some $\gamma \geq 0$.
Moreover, inspired by Proposition \ref{prop:principle}, the adjoint operator $\Lop^*$ should have a left inverse with some stability property. We shall essentially consider two cases, assuming the existence of a left inverse $\Top$ as in Proposition \ref{prop:principle} for $\mathcal{X} = \mathcal{R}(\R^d)$, the space of rapidly decaying measurable functions (see below), or $\mathcal{X} = L^p(\R^d)$ for some $p$ such that $0<p \leq 2$. These spaces naturally arise as domains of continuity of the characteristic functional of L\'evy white noises.

The space  $\mathcal{R}(\R^d)$ is defined as
$\mathcal{R}(\R^d)   = \left\{ f \text{ measurable}, \ (1 + \abs{\cdot})^N f \in L^2(\R^d) \text{ for all } N \in \N \right\}$. It is endowed with a natural Fr\'echet topology, as a projective limit of the Hilbert spaces $ L^2_N(\R^d) = \{ f , \ (1 + \abs{\cdot})^N f \in L^2(\R^d) \}$, $N\in \N$ (see \cite[Chapter IV]{Meise1997introduction} for more details on Fr\'echet spaces). 

Fix a linear, shift-invariant, continuous,  and $\gamma$-homogeneous operator from $\S(\R^d)$ to $\S'(\R^d)$, with $\gamma \geq 0$. We   consider two cases.
\begin{itemize}
	\item \textbf{Condition (C1).} The adjoint $\Lop^*$ admits a $(-\gamma)$-homogeneous left inverse that continuously maps $\S(\R^d)$ to $\mathcal{R}(\R^d)$. 
	\item \textbf{Condition (C2).} The adjoint $\Lop^*$ admits a $(-\gamma)$-homogeneous left inverse that continuously maps $\S(\R^d)$ to $L^p (\R^d)$ for some $0<p \leq 2$. 
\end{itemize}
Note that   (C1) is more restrictive than (C2)  since $\mathcal{R}(\R^d) \subset L^p(\R^d)$ for any $0<p\leq 2$. \\
 
One shall construct the random processes of interests thanks to Proposition \ref{prop:principle}. We start by giving some new results on the continuity of the characteristic functionals of L\'evy white noises that allow  for new compatibility conditions between an operator $\Lop$ and a L\'evy white noise $w$ in the general case and for $p$-admissible white noise (see Definition \ref{def:padmissible} below). 
 
 \begin{definition} \label{def:padmissible}
	We say that a L\'evy exponent $\Psi$ is \emph{$p$-admissible} for $0<p\leq 2$ if $\abs{\Psi(\xi)} \leq C \abs{\xi}^p$ 
	for some $C>0$ and every $\xi \in \R$.
	By extension, a L\'evy white noise with a $p$-admissible L\'evy exponent is said to be \emph{$p$-admissible} itself.
\end{definition} 

\begin{proposition} \label{prop:Rcontinuous}
		Let $w$ be a L\'evy white noise over $\S'(\R^d)$. Then, the characteristic functional $\CF_w$ of $w$ can be extended as a continuous and positive-definite functional over $\mathcal{R}(\R^d)$.
		If moreover $w$ is $p$-admissible for some $0<p\leq 2$, then $\CF_w$ 	can be extended as a continuous and positive-definite functional over $L^{p}(\R^d)$.
\end{proposition}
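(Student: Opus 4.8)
The plan is to exploit the explicit form $\CF_w(\varphi)=\exp\lp\int_{\R^d}\Psi(\varphi(\bm x))\dd\bm x\rp$ of the characteristic functional on $\S(\R^d)$, where $\Psi$ is a L\'evy exponent having a finite $\epsilon$-moment for some $\epsilon>0$ (Theorem~\ref{theo:CFonS}); since the existence of a moment implies that of all smaller ones, one may assume $\epsilon\I1$. Everything rests on a polynomial bound for $\Psi$. Writing $\Psi$ in L\'evy--Khintchine form with Gaussian variance $\sigma^2$, drift $b$ and L\'evy measure $\mu$, I would estimate
\[
\abs{\Psi(\xi)}\I\abs{b}\,\abs{\xi}+\tfrac{\sigma^2}{2}\,\xi^2+\tfrac{\xi^2}{2}\int_{\abs{t}\I1}t^2\,\mu(\dd t)+\int_{\abs{t}>1}\min\lp2,\abs{\xi t}\rp\mu(\dd t),
\]
then bound the last integral by $2^{1-\epsilon}\abs{\xi}^{\epsilon}\int_{\abs{t}>1}\abs{t}^{\epsilon}\mu(\dd t)$, the finiteness $\int_{\abs{t}>1}\abs{t}^{\epsilon}\mu(\dd t)<\infty$ being precisely the temperedness of $w$ (Theorem~\ref{theo:CFonS}). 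Distinguishing $\abs{\xi}\I1$ from $\abs{\xi}>1$, this yields a bound of the form $\abs{\Psi(\xi)}\I C\lp\abs{\xi}^{p_0}+\abs{\xi}^2\rp$ for every $\xi\in\R$, with $p_0:=\epsilon\in(0,1]$. In the $p$-admissible case this step is vacuous, since $\abs{\Psi(\xi)}\I C\abs{\xi}^p$ is then the hypothesis.

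Given this bound, I would show that $\Phi(\varphi):=\int_{\R^d}\Psi(\varphi(\bm x))\dd\bm x$ is well defined and controlled by the topology of $\mathcal{R}(\R^d)$. For $\varphi\in\mathcal{R}(\R^d)$, H\"older's inequality with conjugate exponents $2/p_0$ and $2/(2-p_0)$ gives
\[
\int_{\R^d}\abs{\varphi(\bm x)}^{p_0}\dd\bm x\I K_N^{(2-p_0)/2}\big\lVert(1+\abs{\cdot})^N\varphi\big\rVert_{2}^{p_0},\qquad K_N:=\int_{\R^d}(1+\abs{\bm x})^{-2Np_0/(2-p_0)}\dd\bm x<\infty,
\]
as soon as $N$ is chosen large enough that $2Np_0/(2-p_0)>d$. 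Combined with $\int_{\R^d}\abs{\Psi(\varphi(\bm x))}\dd\bm x\I C\lp\lVert\varphi\rVert_{2}^2+\int_{\R^d}\abs{\varphi}^{p_0}\dd\bm x\rp$ and $\mathcal{R}(\R^d)\hookrightarrow L^2(\R^d)$, this shows that $\Phi$ is finite on $\mathcal{R}(\R^d)$ and that $\abs{\Phi(\varphi)}$ is bounded by a continuous function of the defining seminorms of $\mathcal{R}(\R^d)$; in particular $\CF_w(\varphi_n)=\mathrm{e}^{\Phi(\varphi_n)}\to1$ whenever $\varphi_n\to0$ in $\mathcal{R}(\R^d)$. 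In the $p$-admissible case the same conclusions hold on $L^p(\R^d)$, directly from $\int_{\R^d}\abs{\Psi(\varphi(\bm x))}\dd\bm x\I C\lVert\varphi\rVert_{p}^p$.

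To conclude I would combine this with the elementary inequality $\abs{\CF_w(\varphi)-\CF_w(\psi)}^2\I2\lp1-\Re\,\CF_w(\varphi-\psi)\rp$, valid for any characteristic functional (Cauchy--Schwarz applied to $\E\big[\mathrm{e}^{\mathrm{i}\langle w,\varphi\rangle}-\mathrm{e}^{\mathrm{i}\langle w,\psi\rangle}\big]$): it shows that $\CF_w$ is uniformly continuous on $\S(\R^d)$ for the coarser topology induced by $\mathcal{R}(\R^d)$, resp.\ by $L^p(\R^d)$. Since $\S(\R^d)$ is dense in the complete space $\mathcal{R}(\R^d)$ (by truncation and mollification) and in $L^p(\R^d)$, $\CF_w$ then extends uniquely to a continuous functional on $\mathcal{R}(\R^d)$, resp.\ $L^p(\R^d)$; the extension still satisfies $\CF_w(0)=1$, and positive-definiteness is preserved by passing to the limit, $\sum_{i,j}c_i\overline{c_j}\,\CF_w(\varphi_i-\varphi_j)=\lim_k\sum_{i,j}c_i\overline{c_j}\,\CF_w\big(\varphi_i^{(k)}-\varphi_j^{(k)}\big)\s0$ for approximants $\varphi_i^{(k)}\in\S(\R^d)$ of $\varphi_1,\dots,\varphi_N$. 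The main obstacle is the first step: establishing $\abs{\Psi(\xi)}\I C(\abs{\xi}^{p_0}+\abs{\xi}^2)$ with $p_0>0$ hinges on controlling the large jumps of $\mu$, and it is exactly there that the temperedness hypothesis — hence Theorem~\ref{theo:CFonS} — is indispensable; everything after that is routine.
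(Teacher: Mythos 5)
Your proof is correct and follows essentially the same route as the paper: the key estimate $\abs{\Psi(\xi)}\leq C(\abs{\xi}^{\epsilon}+\abs{\xi}^{2})$, the positive-definiteness inequality reducing everything to continuity of $\CF_w$ at $0$, control of $\int\abs{\Psi(\varphi)}$ by the $L^{\epsilon}$- and $L^{2}$-quasi-norms, and extension by density. The only difference is that you derive the polynomial bound on $\Psi$ directly from the L\'evy--Khintchine representation and the finite $\epsilon$-moment, whereas the paper simply cites it from earlier work, and you spell out the density and limit arguments for positive-definiteness that the paper leaves implicit.
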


\begin{proof}
	The proof for $L_p(\R^d)$ is similar to the one for $\mathcal{R}(\R^d)$, hence we omit it. We only need to prove the continuity, because the positive-definiteness follows simply by density of $\mathcal{S}(\R^d)$ in $\mathcal{R}(\R^d)$.
	The positive-definiteness of $\CF_w$ in $\S(\R^d)$ implies that 
		\begin{equation}
		\abs{\CF(\varphi_2) - \CF(\varphi_1)} \leq  2 \abs{ 1 - \CF(\varphi_2 - \varphi_1) }
		\end{equation}
	for any $\varphi_1, \varphi_2 \in \S(\R^d)$ (see for instance  \cite[Section II.5.1]{Fernique1967processus}  or  \cite[Section IV.1.2, Proposition 1.1]{ProbaBanach1987}). 
For every $z \in \C$ with $\Re\{z\} \leq 0$, 
one has $\lvert \mathrm{e}^{\mathrm{z}} - 1 \rvert \leq \lvert z \rvert$. Since $\Re\{\Psi\}\leq 0$, this implies that 
\begin{equation} \label{eq:boundCF}
	\lvert 1 - \CF_w(\varphi) \rvert \leq \lvert 1 - \mathrm{e}^{\int_{\R^d} \Psi(\varphi(\bm{x}))\mathrm{d}\bm{x}}\rvert \leq \left\lvert \int_{\R^d}  \Psi (\varphi(\bm{x}))   \mathrm{d}\bm{x} \right\rvert \leq \int_{\R^d} \lvert \Psi (\varphi(\bm{x}))  \rvert \mathrm{d}\bm{x}  . 
\end{equation}
	Moreover, according to \cite[Corollary 1]{Fageot2014}, $w$ being tempered, there exists $C>0$ and $\epsilon>0$ such that $\abs{\Psi(\xi)} \leq C ( \abs{\xi}^{\epsilon}  + \abs{\xi}^2)$. Putting the ingredients together, we then easily show that
\begin{equation} \label{eq:boundCFw1w2}
	\abs{\CF_w(\varphi_2) - \CF_w(\varphi_1)}  \leq C ( \lVert \varphi_2 - \varphi_1 \rVert_\epsilon^\epsilon + \lVert \varphi_2 - \varphi_1 \rVert_2^2). 
\end{equation}
Hence, $\CF_w$ can be extended continuously to functions $\varphi \in \mathcal{R}(\R^d) \subset L^2(\R^d) \cap L^\epsilon(\R^d)$ as expected.
\end{proof}


\paragraph{Remark.} In \cite[Definition 4.4]{Unser2014sparse}, an alternative definition of the $p$-admissibility was  introduced. Since this definition requires an additional bound on the derivative of the L\'evy exponent and  is used for the construction of random processes only for $1 \leq p \leq 2$, the second part of Proposition \ref{prop:Rcontinuous} is a generalization of \cite[Theorem 8.2]{Unser2014sparse}.
Proposition \ref{prop:Rcontinuous}   allow for new criteria  to solve SDEs driven by L\'evy white noises.

\begin{theorem} \label{coro:CFwT}
Let $w$ be a L\'evy white noise on $\S'(\R^d)$ with L\'evy exponent $\Psi$ and $\Lop$ be a   linear,  $\gamma$-homogeneous, and continuous operator from $\S(\R^d)$ to $\S'(\R^d)$ for $\gamma \geq 0$. We consider two cases.
\begin{itemize}
	\item \textbf{Condition (C1):} The adjoint $\Lop^*$ admits a $(-\gamma)$-homogeneous left inverse   $\Top$  that maps $\S(\R^d)$ to $\mathcal{R}(\R^d)$.
	\item \textbf{Condition (C2):} There exists $0<p \leq 2$ such that (i) the adjoint $\Lop^*$ admits a $(-\gamma)$-homogeneous  left inverse  $\Top$ that maps $\S(\R^d)$ to $L^p(\R^d)$ and (ii) $\Psi$ is \mbox{$p$-admissible}.	
\end{itemize}
When  (C1) or (C2) are satisfied,   there exists a generalized random process $s$ whose characteristic functional is $\CF(\varphi) = \CF_w(\Top \varphi )$. Moreover, $s$ is a solution of \eqref{eq:Lsw}. 
\end{theorem}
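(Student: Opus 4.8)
The plan is to read Theorem \ref{coro:CFwT} as a direct application of the abstract construction of Proposition \ref{prop:principle}, fed in each of the two cases with the appropriate extension statement for $\CF_w$ furnished by Proposition \ref{prop:Rcontinuous}. In both cases (C1) and (C2) the recipe is the same: exhibit a topological vector space $\mathcal{X}$ which simultaneously (i) receives a continuous linear left inverse $\Top$ of $\Lop^*$ and (ii) is a domain of continuity and positive-definiteness for $\CF_w$, and then invoke Proposition \ref{prop:principle} verbatim to get a generalized random process $s$ with $\CF_s(\varphi)=\CF_w(\Top\varphi)$ and $\Lop s \overset{(d)}{=} w$.

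First, under \textbf{(C1)} I would take $\mathcal{X}=\mathcal{R}(\R^d)$, which is a Fr\'echet space as the projective limit of the weighted Hilbert spaces $L^2_N(\R^d)$. Hypothesis (C1) supplies the continuous left inverse $\Top:\S(\R^d)\to\mathcal{R}(\R^d)$ of $\Lop^*$, and the first assertion of Proposition \ref{prop:Rcontinuous} shows that $\CF_w$ extends to a continuous positive-definite functional on $\mathcal{R}(\R^d)$. Both bullet points of Proposition \ref{prop:principle} are then met, which yields the process $s$ with the claimed characteristic functional and $\Lop s \overset{(d)}{=} w$.

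Next, under \textbf{(C2)} I would take $\mathcal{X}=L^p(\R^d)$ for the $p\in(0,2]$ provided by the hypothesis. Part (ii) of (C2) says that $\Psi$, hence $w$, is $p$-admissible, so the second assertion of Proposition \ref{prop:Rcontinuous} gives the continuous positive-definite extension of $\CF_w$ to $L^p(\R^d)$; part (i) of (C2) gives the continuous left inverse $\Top:\S(\R^d)\to L^p(\R^d)$. Proposition \ref{prop:principle} again applies and delivers the same conclusion. In both cases the resulting $s$ is, by construction, a linear process and a solution of \eqref{eq:Lsw}.

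I do not expect a genuine obstacle: the content is precisely the composition already carried out inside the proof of Proposition \ref{prop:principle}. The points that deserve a line of care are that $L^p(\R^d)$ with $0<p<1$ is only a complete metrizable topological vector space rather than a Banach space, which is harmless since Proposition \ref{prop:principle} is stated for an arbitrary topological vector space $\mathcal{X}$; that the continuity of $\Top$ is part of Conditions (C1)/(C2) as formulated in Section \ref{subsec:operators} (alternatively, between the Fr\'echet spaces $\S(\R^d)$ and $\mathcal{R}(\R^d)$ it could be recovered from the closed graph theorem); and that the $(-\gamma)$-homogeneity of $\Top$ is not used in this proof — it is recorded only because it will be needed for the scaling analysis of Section \ref{sec:theorems}.
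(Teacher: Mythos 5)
Your proposal is correct and follows exactly the paper's own argument: apply Proposition \ref{prop:principle} with $\mathcal{X}=\mathcal{R}(\R^d)$ under (C1) and $\mathcal{X}=L^p(\R^d)$ under (C2), with Proposition \ref{prop:Rcontinuous} supplying the continuity and positive-definiteness of the extended $\CF_w$. Your added remarks (on $L^p$ for $p<1$ being merely a complete metrizable topological vector space, and on the homogeneity of $\Top$ being unused here) are accurate but not needed for the argument.
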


\begin{proof}
	The result follows from the application of Proposition \ref{prop:principle}  with $\mathcal{X} = \mathcal{R}(\R^d)$ and $\mathcal{X} = L^p(\R^d)$, respectively. The assumptions on $\CF_w$ are satisfied due  to Proposition \ref{prop:Rcontinuous}.
\end{proof}
 
\begin{definition}	
	A generalized random process $s$ constructed with the method of Theorem \ref{coro:CFwT} 
	  is called  a \emph{$\gamma$-order linear process}. We summarize the situation described in Theorem \ref{coro:CFwT} with the (slightly abusive) notation $s = \Lop^{-1} w$.
\end{definition}

\paragraph{Remark.} 
The L\'evy exponent of a S$\alpha$S white noise is $\Psi(\xi) = - C \abs{\xi}^\alpha$ for some constant $C>0$ and thus is $\alpha$-admissible. The construction of a process $s$ such that $\Lop s = w_\alpha$ therefore relies on the existence of a left inverse $\Top$ of $\Lop^*$ that maps continuously $\S(\R^d)$ into $L^{\alpha}(\R^d)$.

\section{Scaling Limits of $\gamma$-Order Linear Processes} \label{sec:theorems}

In this section, we study the statistical behavior of $\gamma$-order linear  processes at coarse- and fine-scales. We recall that for  a generalized random process $s$ and a nonnegative real number $a$, the process $s( \cdot /a)$ is defined by $\langle s(\cdot / a) , \varphi \rangle = a^d \langle s , \varphi ( a \cdot) \rangle$. 
\begin{itemize}
	\item We zoom out the process when $a<1$. In particular, we  consider the limit case $a \rightarrow 0$ and call it the \emph{coarse-scale  behavior} of $s$.  
	\item We zoom in the process when $a>1$. Again, we   pay attention to the limit case $a \rightarrow \infty$, which one call the \emph{fine-scale behavior} of $s$.  
\end{itemize}
In general, we shall see that $s( \cdot / a)$ has no nontrivial limits itself when $a\rightarrow 0/\infty$. However, we will encounter situations where $a^{H} s(\cdot / a)$ has a stochastic limit for some $H \in \R$. When it exists, the coefficient $H$ is unique and determines the renormalization procedure required to observe the convergence phenomenon.

In what follows, we first treat the case of SDEs driven by S$\alpha$S white noises as a preparatory example. Their solutions are actually self-similar and have therefore  straightforward scaling limit behaviors (Section \ref{subsec:SalphaS}). 
We then give sufficient conditions on the L\'evy exponent to determine the coarse- and fine-scales behaviors of $\gamma$-order linear processes. These results are presented in Section {subsec:main} and are the main contributions of this paper. 
 Finally, we question the necessity of our conditions such that the scaling limit exists in Section \ref{sec:necessity}.

	\subsection{Linear Processes Driven by S$\alpha$S White Noises} \label{subsec:SalphaS}

When the white noise is stable, the change of scale has by definition no effect on the noise up to  renormalization. Under reasonable assumptions on the operator $\Lop$, we extend this fact to solutions of SDEs driven by S$\alpha$S white noises. This property is referred to as   self-similarity. 

\begin{definition}
	A generalized random process $s$ is said to be \emph{self-similar of order $H$} if $a^{H} s( \cdot / a ) \overset{(d)}{=} s$ 
	for all $a>0$. The parameter $H$ is called the \emph{Hurst exponent} of $s$. 
\end{definition}

The coarse-scale and fine-scale behaviors of a self-similar process are obvious, since the law of the process is not changed by scaling, up to  renormalization. The self-similarity property is directly inferred from the characteristic functional of the process. Indeed, since 
$\CF_{a^{H} s(\cdot /a) }(\varphi) 
=
 \mathbb{E}\left[ \mathrm{e}^{\mathrm{i} \langle a^{H} s(\cdot /a) ,\varphi \rangle} \right] 
=
\mathbb{E}\left[ \mathrm{e}^{\mathrm{i} \langle s ,a^{d+H} \varphi(a \cdot)  \rangle} \right]  
=
  \CF_s(a^{d+H} \varphi(a \cdot) )$,
we deduce that $s$ is self-similar of order $H$ if and only if
$	\CF_s(\varphi) = \CF_s(a^{d+H} \varphi(a \cdot) )
$ for every $\varphi \in \S(\R^d)$ and $a>0$. This equivalence and some other considerations on self-similar processes can be found in \cite[Section 7.2]{Unser2014sparse}.

\begin{proposition} \label{prop:selfsim}
 	Let $\gamma \geq 0$ and $0<\alpha \leq 2$. We assume that $s = \Lop^{-1} w_\alpha$ is a $\gamma$-order linear process driven by a S$\alpha$S white noise. Then, $s$ is self-similar with Hurst exponent 
	$H = \gamma + d\left( \frac{1}{\alpha} - 1\right)$.
\end{proposition}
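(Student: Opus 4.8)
The plan is to check the characteristic-functional criterion for self-similarity recalled just above the statement: $s$ is self-similar of order $H$ if and only if $\CF_s(\varphi) = \CF_s\bigl(a^{d+H}\varphi(a\cdot)\bigr)$ for every $\varphi \in \S(\R^d)$ and every $a>0$. Since the L\'evy exponent of $w_\alpha$ is $\Psi(\xi) = -C\abs{\xi}^\alpha$, which is $\alpha$-admissible, the process $s$ is constructed via branch (C2) of Theorem \ref{coro:CFwT} with $p=\alpha$, so that $\CF_s(\varphi) = \CF_{w_\alpha}(\Top\varphi) = \exp\bigl(-C\lVert\Top\varphi\rVert_\alpha^\alpha\bigr)$, where $\Top$ is a $(-\gamma)$-homogeneous left inverse of $\Lop^*$ mapping $\S(\R^d)$ continuously into $L^\alpha(\R^d)$. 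Thus everything reduces to tracking how the quantity $\lVert\Top\varphi\rVert_\alpha^\alpha$ transforms under the dilation $\varphi \mapsto a^{d+H}\varphi(a\cdot)$.

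First I would exploit the homogeneity of $\Top$: writing $\varphi(a\cdot) = \varphi(\cdot/b)$ with $b = 1/a$ and applying the definition of $(-\gamma)$-homogeneity gives $\Top\{\varphi(\cdot/b)\} = b^{\gamma}(\Top\varphi)(\cdot/b) = a^{-\gamma}(\Top\varphi)(a\cdot)$, hence $\Top\{a^{d+H}\varphi(a\cdot)\} = a^{d+H-\gamma}(\Top\varphi)(a\cdot)$ by linearity. Then the change of variable $\bm{y}=a\bm{x}$ in the integral yields
\[
\bigl\lVert\Top\{a^{d+H}\varphi(a\cdot)\}\bigr\rVert_\alpha^\alpha
= a^{\alpha(d+H-\gamma)}\int_{\R^d}\bigl\lvert(\Top\varphi)(a\bm{x})\bigr\rvert^\alpha\dd\bm{x}
= a^{\alpha(d+H-\gamma)-d}\,\lVert\Top\varphi\rVert_\alpha^\alpha .
\]
Consequently $\CF_s\bigl(a^{d+H}\varphi(a\cdot)\bigr) = \exp\bigl(-C\,a^{\alpha(d+H-\gamma)-d}\lVert\Top\varphi\rVert_\alpha^\alpha\bigr)$, and this coincides with $\CF_s(\varphi)$ for all $\varphi$ and all $a>0$ exactly when the exponent $\alpha(d+H-\gamma)-d$ vanishes, that is when $H = \gamma + d\bigl(\tfrac1\alpha-1\bigr)$; for that value of $H$ the criterion holds, so $s$ is self-similar of order $H$, as asserted.

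I do not anticipate a genuine obstacle here: the argument is essentially a bookkeeping of dilation exponents together with the elementary scaling of the $L^\alpha$ norm. The two points that deserve care are (i) invoking the right hypothesis of Theorem \ref{coro:CFwT}, noting that the $\alpha$-admissibility of $\Psi(\xi)=-C\abs{\xi}^\alpha$ is precisely what guarantees $\Top\varphi \in L^\alpha(\R^d)$, which is the domain of continuity on which $\CF_{w_\alpha}$ has been extended; and (ii) keeping the scaling conventions straight, namely distinguishing $\varphi(a\cdot)$ from $\varphi(\cdot/a)$ and retaining the Jacobian factor $a^{-d}$. Uniqueness of $H$ then follows at once, since two distinct admissible values $H,H'$ would force $a^{\alpha(d+H-\gamma)-d}=a^{\alpha(d+H'-\gamma)-d}$ for all $a>0$, which is impossible unless $H=H'$.
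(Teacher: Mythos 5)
Your proof is correct and follows essentially the same route as the paper: both verify the characteristic-functional criterion $\CF_s(\varphi)=\CF_s(a^{d+H}\varphi(a\cdot))$ using the $(-\gamma)$-homogeneity of $\Top$ and the scaling of the $L^\alpha$ norm under dilation. The only cosmetic difference is that you derive the value of $H$ from the vanishing of the exponent $\alpha(d+H-\gamma)-d$, whereas the paper substitutes $H=\gamma+d(1/\alpha-1)$ from the outset and checks the identity directly.
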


\begin{proof}
	By definition of a $\gamma$-order linear process, there exists a linear operator $\Top$, left inverse of $\Lop^*$, that is $(-\gamma)$-homogeneous, continuous from $\S(\R^d)$ to $L^{\alpha}(\R^d)$,  and such that $\CF_s(\varphi) = \exp \left( - C \lVert \Top \varphi \rVert^\alpha_\alpha\right)$. Then, we have
	\begin{align*}
		\CF_s(a^{d+ H } \varphi ( a \cdot) ) &=   \exp \left( - C  \lVert a^{\gamma + d/ \alpha} \Top \{ \varphi (a \cdot) \} \rVert_\alpha^\alpha \right) \overset{(i)}{=} \exp \left( - C   \lVert a^{ d/ \alpha} \{  \Top \varphi \}  (a \cdot) \rVert_\alpha^\alpha \right)  \overset{(ii)}{=} \exp \left( - C \lVert \Top \varphi \rVert_\alpha^\alpha\right)		 = \CF_s(\varphi),  \nonumber
	\end{align*}
	where we used respectively the $(-\gamma)$-homogeneity of $\Top$ and the change of variable $\bm{y} = a \bm{x}$ in (i) and (ii).  This implies that $s$ is self-similar with the  Hurst exponent $H = \gamma + d(1/\alpha - 1)$.
	\end{proof}

	\subsection{Linear Processes at Coarse- and Fine-Scales} \label{subsec:main}

Here we consider the general problem of characterizing the coarse and large scale behaviors of $\gamma$-order linear processes.
We analyse the coarse- and fine-scale behavior separately even if the methods of proof are  similar, in order to emphasize the different  assumptions: the relevant parameter of the underlying white noise is the index $\beta_0$ at coarse scales and $\beta_\infty$ at fine scales.
 
 We have seen in Section \ref{subsec:SalphaS} that two ingredients are sufficient to make a linear process self-similar: the self-similarity of the L\'evy noise and the homogeneity of the left-inverse operator $\Top$ of $\Lop^*$. Moreover, the self-similarity of a L\'evy noise is equivalent to the stability of the underlying infinitely divisible random variable \cite{Taqqu1994stable}. 
Even if $\gamma$-order linear processes are not self-similar in general, one can often recover the self-similarity by asymptotically zooming  the process in or out.  

\begin{definition} \label{def:asslocselfsim}
	We say that the generalized random process $s$ is
	 \emph{asymptotically self-similar of order $H_\infty \in \R$} (\emph{locally self-similar of order $H_{\loc} \in \R$}, respectively)  if the rescaled processes $a^{H_\infty} s(\cdot / a)$ ($a^{H_\loc} s(\cdot / a)$ , respectively) converge in law to a non-trivial generalized random process as $a\rightarrow 0$ ($a\rightarrow \infty$, respectively).
\end{definition}

\noindent The terminology of Definition \ref{def:asslocselfsim} is justified by Proposition \ref{def:asslocselfsim}.

\begin{proposition}  \label{prop:Hinftyforce}
If the generalized random process $s$ is asymptotically self-similar of order $H_\infty$ (locally self-similar of order $H_\loc$, respectively), then the limit is self-similar of order $H_\infty$ ($H_\loc$, respectively).  
\end{proposition}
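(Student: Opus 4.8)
The plan is to reduce the self-similarity of the limiting process to an identity between characteristic functionals, and then to exploit the semigroup structure of the rescaling operation together with the uniqueness of limits in the L\'evy continuity theorem (Theorem \ref{theo:BoulicautLevy}). I treat the asymptotic case $a \to 0$; the local case $a \to \infty$ is identical after replacing $a \to 0$ by $a \to \infty$ throughout. Denote by $s_a := a^{H_\infty} s(\cdot/a)$ the rescaled process and let $s_\infty$ be its limit in law as $a \to 0$, which exists and is nontrivial by hypothesis. By Theorem \ref{theo:BoulicautLevy}, this convergence is equivalent to $\CF_{s_a}(\varphi) \to \CF_{s_\infty}(\varphi)$ for every $\varphi \in \S(\R^d)$. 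Recall from the computation preceding Proposition \ref{prop:selfsim} that for any process $t$ and any $b>0$ one has $\CF_{b^{H} t(\cdot/b)}(\varphi) = \CF_t(b^{d+H}\varphi(b\,\cdot))$; I will use this repeatedly.

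First I would fix $b>0$ and compute the characteristic functional of $b^{H_\infty} s_\infty(\cdot/b)$. Using the displayed identity, $\CF_{b^{H_\infty} s_\infty(\cdot/b)}(\varphi) = \CF_{s_\infty}\bigl(b^{d+H_\infty}\varphi(b\,\cdot)\bigr)$, and since $\CF_{s_a} \to \CF_{s_\infty}$ pointwise, the right-hand side equals $\lim_{a\to 0}\CF_{s_a}\bigl(b^{d+H_\infty}\varphi(b\,\cdot)\bigr)$. Now the key algebraic step: again by the displayed identity, $\CF_{s_a}\bigl(b^{d+H_\infty}\varphi(b\,\cdot)\bigr) = \CF_{b^{H_\infty}s_a(\cdot/b)}(\varphi)$, and since rescaling composes as a one-parameter group, $b^{H_\infty}s_a(\cdot/b) = b^{H_\infty}(ab)^{-? }\cdots$ — more precisely $b^{H_\infty}\bigl(a^{H_\infty}s(\cdot/a)\bigr)(\cdot/b) = (ab)^{H_\infty} s\bigl(\cdot/(ab)\bigr) = s_{ab}$. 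Therefore $\CF_{s_a}\bigl(b^{d+H_\infty}\varphi(b\,\cdot)\bigr) = \CF_{s_{ab}}(\varphi)$. As $a \to 0$ with $b$ fixed, $ab \to 0$ as well, so $\CF_{s_{ab}}(\varphi) \to \CF_{s_\infty}(\varphi)$. Combining the two chains of equalities gives $\CF_{b^{H_\infty}s_\infty(\cdot/b)}(\varphi) = \CF_{s_\infty}(\varphi)$ for every $\varphi \in \S(\R^d)$ and every $b>0$, which by the injectivity of the characteristic functional (Theorem \ref{theo:MinlosBochner}, or simply the fact that it determines the law) means $b^{H_\infty}s_\infty(\cdot/b) \overset{(d)}{=} s_\infty$, i.e.\ $s_\infty$ is self-similar of order $H_\infty$.

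The step I expect to be the only real point requiring care is the composition $b^{H_\infty}s_a(\cdot/b) = s_{ab}$: it relies on the fact that $\langle b^{H}(a^{H}s(\cdot/a))(\cdot/b),\varphi\rangle$ unwinds correctly through the definition $\langle u(\cdot/c),\varphi\rangle = c^d\langle u,\varphi(c\,\cdot)\rangle$ into $\langle (ab)^{H}s(\cdot/(ab)),\varphi\rangle$, which is a routine but easily-bungled bookkeeping of the scaling factors $b^d$, $a^d$ and the homogeneity exponents; one checks $b^d\cdot a^d = (ab)^d$ and $b^{H}\cdot a^{H} = (ab)^{H}$ and that the dilation argument composes as $\varphi(b\,\cdot)$ then $\varphi(ab\,\cdot)$. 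Everything else is an immediate consequence of the L\'evy continuity theorem and the fact that $a \mapsto ab$ still tends to $0$ (resp.\ $\infty$) as $a$ does. Finally, nontriviality of $s_\infty$ is inherited directly from the hypothesis, so the limit is a genuine self-similar process and not the zero process.
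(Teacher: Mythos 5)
Your proof is correct and follows essentially the same route as the paper: both rest on the composition identity $b^{H_\infty}\bigl(a^{H_\infty}s(\cdot/a)\bigr)(\cdot/b)=(ab)^{H_\infty}s(\cdot/(ab))$ and the observation that $ab\to 0$ (resp.\ $\infty$) forces the rescaled limit $b^{H_\infty}s_\infty(\cdot/b)$ to coincide in law with $s_\infty$. The only difference is cosmetic: you make explicit, via characteristic functionals and Theorem \ref{theo:BoulicautLevy}, the step the paper dismisses with ``clearly, $s_b$ is also asymptotically self-similar with limit $b^{H_\infty}s_\infty(\cdot/b)$.''
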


\begin{proof}	
	Assume that $s$ is asymptotically self-similar of order $H_\infty$. 
	Then, there exists a process $s_{\infty}$ such that $a^{H_\infty} s(\cdot / a)$ converges in law to $s_\infty$. Let $b>0$ and set $s_b = b^{H_\infty} s ( \cdot / b)$. Clearly, $s_b$ is also asymptotically self-similar with limit $b^{H_\infty} s_\infty(\cdot / b)$. 
	Moreover, $a^{H_\infty} s_b( \cdot / a) = (ab)^{H_\infty} s(\cdot / (ab))$. This latter quantity has the same limit in law than $a^{H_\infty}s(\cdot /a)$ as $a \rightarrow 0$, which is $s_\infty$. As a consequence, we have shown that $b^{H_\infty}s_\infty(\cdot / b)$ and $s_\infty$ have the same law for any $b>0$ and the limit is $H_\infty$-self-similar. The proof is identical in the local case.
\end{proof}

One now considers the following question: When is a generalized L\'evy process asymptotically self-similar, when is it locally self-similar, and, if so, what are the asymptotic and local Hurst exponents $H_\infty$ and $H_\loc$? Theorems \ref{theo:coarsescale} and \ref{theo:finescale} answer these two questions. 
 
\begin{theorem}[Coarse-scale behavior of $\gamma$-order linear processes] \label{theo:coarsescale}

We consider  a L\'evy white noise $w$ with L\'evy exponent $\Psi$ and index $\beta_0 \in (0,2]$ and a $\gamma$-homogeneous operator $\Lop$ with $\gamma \geq 0$.
We assume that there exists an operator $\Top$ such that $(\Top , \Psi)$ satisfies one of the following set of conditions.
\begin{itemize}
\item \textbf{Condition (C1).} $\Top$ is a $(-\gamma)$-homogeneous left inverse of $\Lop^*$, continuous from $\S(\R^d)$ to $\mathcal{R}(\R^d)$; or
\item \textbf{Condition (C2).} $\Top$ is a $(-\gamma)$-homogeneous left inverse of $\Lop^*$, continuous from $\S(\R^d)$ to $L^{\beta_0}(\R^d)$ and $\Psi$ is $\beta_0$-admissible.
\end{itemize}
Let $s = \Lop^{-1} w$ be the $\gamma$-order linear process with characteristic functional $\CF_s(\varphi) = \CF_w(\Top \varphi)$.
If $\Psi(\xi) \underset{0}{\sim} -  C \abs{\xi}^{\beta_0}$ for some constant $C>0$, we have the   convergence in law
		\begin{equation} \label{eq:scoarse}
			a^{\gamma + d\left( \frac{1}{{\beta_0}} - 1 \right)} s (\cdot / a) \underset{a \rightarrow 0}{\overset{(d)}{\longrightarrow}} s_{\Lop, {\beta_0}},
		\end{equation}
where  $\Lop s_{\Lop,{\beta_0}} \overset{(d)}{=}  w_{\beta_0}$ is a S$\alpha$S white noise with $\alpha = \beta_0$. 
In particular, $s$ is asymptotically self-similar of order
$	H_{\infty} = \gamma + d\left( \frac{1}{\beta_0} - 1 \right)$.
\end{theorem}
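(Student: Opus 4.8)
The strategy is to verify convergence in law through the L\'evy continuity theorem (Theorem~\ref{theo:BoulicautLevy}): it suffices to show that the characteristic functionals of the rescaled processes converge pointwise on $\S(\R^d)$ to the characteristic functional of the claimed limit. First I would compute $\CF_{a^{H_\infty} s(\cdot/a)}(\varphi)$ explicitly. Using the scaling rule $\langle s(\cdot/a),\varphi\rangle = a^d\langle s,\varphi(a\cdot)\rangle$ together with $\CF_s(\varphi) = \CF_w(\Top\varphi)$ and the form $\CF_w(\psi) = \exp(\int_{\R^d}\Psi(\psi(\bm{x}))\drm\bm{x})$, I get that
\begin{equation*}
	\CF_{a^{H_\infty} s(\cdot/a)}(\varphi) = \exp\left( \int_{\R^d} \Psi\bigl( a^{d+H_\infty}\,(\Top\{\varphi(a\cdot)\})(\bm{x}) \bigr)\,\drm\bm{x} \right).
\end{equation*}
Invoking the $(-\gamma)$-homogeneity of $\Top$, one has $\Top\{\varphi(a\cdot)\} = a^{\gamma}(\Top\varphi)(a\cdot)$, so the argument of $\Psi$ becomes $a^{d+H_\infty+\gamma}(\Top\varphi)(a\bm{x})$; after the change of variables $\bm{y} = a\bm{x}$ (picking up a factor $a^{-d}$ from the measure) the exponent reads $\int_{\R^d}\Psi\bigl(a^{d+H_\infty+\gamma}(\Top\varphi)(\bm{y})\bigr)a^{-d}\drm\bm{y}$. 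With the choice $H_\infty = \gamma + d(1/\beta_0 - 1)$, the power $d + H_\infty + \gamma$ is arranged so that $a^{d+H_\infty+\gamma} = a^{d/\beta_0}$ and the prefactor is $a^{-d} = a^{-(d/\beta_0)\beta_0}$, so that, writing $\lambda_a = a^{d/\beta_0} \to 0$ as $a\to 0$, the whole expression is $\int_{\R^d}\lambda_a^{-\beta_0}\Psi\bigl(\lambda_a\,(\Top\varphi)(\bm{y})\bigr)\drm\bm{y}$.

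Next I would pass to the limit inside the integral. The pointwise behavior is immediate from the hypothesis $\Psi(\xi)\underset{0}{\sim}-C\abs{\xi}^{\beta_0}$: for each fixed $\bm{y}$ with $(\Top\varphi)(\bm{y})\neq 0$, $\lambda_a^{-\beta_0}\Psi(\lambda_a(\Top\varphi)(\bm{y})) \to -C\abs{(\Top\varphi)(\bm{y})}^{\beta_0}$ as $\lambda_a\to 0$, and the integrand vanishes where $(\Top\varphi)(\bm{y})=0$. To exchange limit and integral I would produce a dominating function: under (C1), Proposition~\ref{prop:Rcontinuous} (via \cite[Corollary~1]{Fageot2014}) gives $\abs{\Psi(\xi)}\leq C(\abs{\xi}^\epsilon + \abs{\xi}^2)$, hence $\lambda_a^{-\beta_0}\abs{\Psi(\lambda_a u)} \leq C(\lambda_a^{\epsilon-\beta_0}\abs{u}^\epsilon + \lambda_a^{2-\beta_0}\abs{u}^2)$; since $\epsilon$ can be taken below $\beta_0$ and $\beta_0\leq 2$, for $\lambda_a\leq 1$ this is bounded by $C(\abs{u}^\epsilon + \abs{u}^2)$ uniformly in $a$, and $\Top\varphi\in\mathcal{R}(\R^d)\subset L^\epsilon\cap L^2$ supplies an integrable majorant. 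Under (C2), $\beta_0$-admissibility gives $\abs{\Psi(\xi)}\leq C\abs{\xi}^{\beta_0}$ directly, so $\lambda_a^{-\beta_0}\abs{\Psi(\lambda_a u)}\leq C\abs{u}^{\beta_0}$ with $\Top\varphi\in L^{\beta_0}(\R^d)$ as majorant. Dominated convergence then yields
\begin{equation*}
	\CF_{a^{H_\infty}s(\cdot/a)}(\varphi) \underset{a\to 0}{\longrightarrow} \exp\left( -C\int_{\R^d}\abs{(\Top\varphi)(\bm{y})}^{\beta_0}\drm\bm{y} \right) = \exp\bigl(-C\lVert\Top\varphi\rVert_{\beta_0}^{\beta_0}\bigr).
\end{equation*}

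The right-hand side is exactly $\CF_{w_{\beta_0}}(\Top\varphi) = \CF_{s_{\Lop,\beta_0}}(\varphi)$, where $s_{\Lop,\beta_0} = \Lop^{-1}w_{\beta_0}$ is the $\gamma$-order linear process driven by the S$\alpha$S white noise with $\alpha=\beta_0$ — which is well-defined, since $\Top$ maps continuously into $\mathcal{R}(\R^d)$ or $L^{\beta_0}(\R^d)$ and $\Psi_{w_{\beta_0}}(\xi)=-C\abs{\xi}^{\beta_0}$ is $\beta_0$-admissible, so Theorem~\ref{coro:CFwT} applies. By Theorem~\ref{theo:BoulicautLevy} this gives the convergence in law \eqref{eq:scoarse}. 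The limit is non-trivial (it is a genuine S$\alpha$S-driven process, not a constant), so by Definition~\ref{def:asslocselfsim} $s$ is asymptotically self-similar, and Proposition~\ref{prop:selfsim} confirms the limit has Hurst exponent $H_\infty = \gamma + d(1/\beta_0 - 1)$, consistent with Proposition~\ref{prop:Hinftyforce}. The main obstacle is the uniform domination step: one must check that the exponents $\epsilon-\beta_0$ and $2-\beta_0$ have the right sign (or are handled by restricting to $\lambda_a\leq 1$) so that a single integrable majorant works for all small $a$, and that the edge case $\beta_0 = 2$ does not break the argument — both are manageable but require care with the inequalities.
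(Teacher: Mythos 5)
Your overall route is the same as the paper's: reduce to pointwise convergence of characteristic functionals via Theorem \ref{theo:BoulicautLevy}, use the $(-\gamma)$-homogeneity of $\Top$ and a change of variables to rewrite the log-characteristic functional as $\int_{\R^d}a^{-d}\Psi\big(a^{d/\beta_0}\Top\varphi(\bm{y})\big)\drm\bm{y}$, identify the pointwise limit from $\Psi(\xi)\underset{0}{\sim}-C\abs{\xi}^{\beta_0}$, and conclude by dominated convergence; your treatment of (C2) and of the identification of the limit is exactly the paper's. (A minor slip: with the paper's convention, $(-\gamma)$-homogeneity gives $\Top\{\varphi(a\cdot)\}=a^{-\gamma}(\Top\varphi)(a\cdot)$, not $a^{\gamma}(\Top\varphi)(a\cdot)$, and the exponent that collapses to $d/\beta_0$ is $d+H_\infty-\gamma$; your two sign errors cancel and the final formula is correct.)

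There is, however, a genuine gap in your domination step under (C1). You bound $\lambda_a^{-\beta_0}\abs{\Psi(\lambda_a u)}\leq C\big(\lambda_a^{\epsilon-\beta_0}\abs{u}^\epsilon+\lambda_a^{2-\beta_0}\abs{u}^2\big)$ and claim this is at most $C(\abs{u}^\epsilon+\abs{u}^2)$ for $\lambda_a\leq 1$ ``since $\epsilon$ can be taken below $\beta_0$.'' But $\epsilon<\beta_0$ means $\epsilon-\beta_0<0$, so $\lambda_a^{\epsilon-\beta_0}\rightarrow\infty$ as $\lambda_a\rightarrow 0$: restricting to $\lambda_a\leq 1$ makes that factor \emph{large}, not small, and no uniform integrable majorant results. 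The underlying problem is that the generic tempered estimate $\abs{\Psi(\xi)}\leq C(\abs{\xi}^\epsilon+\abs{\xi}^2)$ is too lossy near $\xi=0$, which is exactly the regime entered as $a\rightarrow 0$; and $\epsilon$ cannot in general be pushed up to $\beta_0$, since it is constrained by the finite moments of the noise (e.g.\ Cauchy-type tails exclude $\epsilon=\beta_0=1$). The paper closes this case differently: because $\Psi$ is continuous and $\Psi(\xi)\sim-C\abs{\xi}^{\beta_0}$ at the origin, there is $C'>0$ with $\abs{\Psi(\xi)}\leq C'\abs{\xi}^{\beta_0}$ for all $\abs{\xi}\leq\lVert\Top\varphi\rVert_\infty$; since $\abs{a^{d/\beta_0}\Top\varphi(\bm{y})}\leq\lVert\Top\varphi\rVert_\infty$ for $a\leq 1$, this gives the $a$-uniform majorant $C'\abs{\Top\varphi(\bm{y})}^{\beta_0}$, integrable because $\mathcal{R}(\R^d)\subset L^{\beta_0}(\R^d)$. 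Replacing your majorant by this one (which uses the theorem's hypothesis on $\Psi$ at the origin rather than the generic temperedness bound) repairs the (C1) branch; the rest of your argument then goes through.
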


\begin{proof}
	First, assuming that (C1) or (C2) holds, Theorem \ref{coro:CFwT} implies that both $\CF_w(\Top \varphi)$ and \mbox{$\CF_{w_{\beta_0}}(\Top \varphi) = \exp(- C \lVert \Top \varphi \rVert_{\beta_0}^{\beta_0} )$} are  characteristic functionals, hence the processes $s$ and $s_{\Lop, \beta_0}$ are well defined. 

	By Theorem \ref{theo:BoulicautLevy}, we know moreover that the convergence in law \eqref{eq:scoarse} is equivalent to the pointwise convergence of the characteristic functionals. Hence, we have to prove that, for every $\varphi \in \S(\R^d)$,
	\begin{equation} \label{eq:convergenceCFnoise}
		\log \CF_{a^{\gamma + d\left( {1}/{{\beta_0}} - 1 \right)} s(\cdot / a) }(\varphi) \underset{a \rightarrow 0}{\longrightarrow} - C \lVert  \Top \varphi \rVert_{\beta_0}^{\beta_0}. 
	\end{equation}
	We fix $\varphi \in \S(\R^d)$. Then, we have
	\begin{align} 
		\langle a^{\gamma + d\left( {1}/{{\beta_0}} - 1 \right)} s(\cdot / a) , \varphi \rangle 
		&= \langle w, a^{\gamma + d/{\beta_0}} \varphi( a \cdot) \rangle 		\overset{(i)}{=} \langle w , \Top \{a^{\gamma + d / \beta_0} \varphi(a \cdot) \} \rangle 		\overset{(ii)}{=} \langle w , a^{d / \beta_0} \{\Top \varphi \}(a \cdot) \rangle, 
	\end{align}
	where we have used that $\langle s, \varphi \rangle = \langle w , \Top \varphi \rangle$ and the $(-\gamma)$-homogeneity of $\Top$   in (i) and (ii), respectively. Therefore, we have
	\begin{align} \label{eq:intermediaries}
			\log \CF_{a^{\gamma + d\left(  {1}/{{\beta_0}} - 1 \right)} s(\cdot / a) }(\varphi)  & = \log \CF_w(a^{d/{\beta_0}} \{\Top \varphi \} (a \cdot))  = \int_{\R^d} \Psi( a^{d/ {\beta_0}} \{\Top \varphi \} (a \bm{x})) \drm \bm{x} = \int_{\R^d} \left( a^{-d} \Psi( a^{d/{\beta_0}} \Top \varphi (\bm{y}) \right) \drm \bm{y}. 
	\end{align}
	By assumption on $\Psi$, we have moreover that, for every $\bm{y}\in \R^d$,
	\begin{equation}
		a^{-d} \Psi(a^{d/{\beta_0}}  \Top \varphi (\bm{y})) \underset{a\rightarrow 0}{\longrightarrow} - C \abs{ \Top \varphi(\bm{y})}^{\beta_0}.
	\end{equation}
We split here the proof in two parts, depending on whether $\Top$ and $\Psi$ follow (C1) or (C2).
	
	\begin{itemize}
	
	\item We start with (C2).
	The $\beta_0$-admissibility of $\Psi$ implies that
	\begin{equation} \label{eq:boundy}
	\abs{a^{-d} \Psi(a^{d/{\beta_0}} \Top \varphi(\bm{y}))} \leq C' \abs{ \Top \varphi(\bm{y})}^{\beta_0}
	\end{equation}
	for some $C'>0$ and every $\bm{y}\in\R^d$. The right term of \eqref{eq:boundy} is integrable by assumption on $\Top$. Therefore, the Lebesgue   dominated-convergence theorem applies and \eqref{eq:convergenceCFnoise} is showed.
	
	\item We assume now (C1).
	In that case, we do not have a full bound on $\Psi$. 
	We know however that $\Top \varphi$ is bounded, hence $\lVert \Top \varphi \rVert_\infty < \infty$. 
	Since $\Psi$ is continuous and behaves like $(-C \abs{\cdot}^{\beta_0})$ at the origin, there exists $C' > 0$ such that $\abs{\Psi(\xi)} \leq C' \abs{\xi}^{\beta_0}$ for every $\abs{\xi}\leq \lVert \Top \varphi\rVert_\infty$. 
	Hence, for all $a\leq 1$, we have $\abs{a^{d/\beta_0} \Top \varphi(\bm{y})} \leq 1$, and  \eqref{eq:boundy} is still valid. Again, we deduce  \eqref{eq:convergenceCFnoise}  from the Lebesgue dominated convergence theorem. 
	
	\end{itemize}
	Finally, the limit process $s_{\Lop, \beta_0}$ is self-similar with order $H_{\infty} = \gamma + d\left( \frac{1}{\beta_0} - 1 \right)$ according to Proposition \ref{prop:selfsim}. 
	\end{proof}

\begin{theorem}[Fine-scale behavior of $\gamma$-order linear processes] \label{theo:finescale}
Under the same assumptions as in Theorem \ref{theo:coarsescale} but replacing $\beta_0$ by $\beta_\infty \in (0,2]$, we consider $s = \Lop^{-1} w$ a $\gamma$-order linear process. If the L\'evy exponent $\Psi$ of $w$ satisfies $\Psi(\xi) \underset{\infty}{\sim} -  C \abs{\xi}^{\beta_\infty}$ for some constant $C>0$, then we have the   convergence in law 
		\begin{equation} \label{eq:sfine}
			a^{\gamma + d\left( \frac{1}{{\beta_\infty}} - 1 \right)} s (\cdot / a) \underset{a \rightarrow \infty}{\overset{(d)}{\longrightarrow}} s_{\Lop, {\beta_\infty}},
		\end{equation}
where  $\Lop s_{\Lop,{\beta_\infty}} \overset{(d)}{=}  w_{\beta_\infty}$ is a S$\alpha$S white noise with $\alpha = \beta_\infty$.
In particular, $s$ is locally self-similar of order
$	H_{\loc} = \gamma + d\left( \frac{1}{\beta_\infty} - 1 \right).$
\end{theorem}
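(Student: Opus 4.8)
The plan is to mirror the proof of Theorem~\ref{theo:coarsescale} almost verbatim, exploiting the symmetry between the roles of $\beta_0$ at coarse scales and $\beta_\infty$ at fine scales. First I would invoke Theorem~\ref{coro:CFwT} under (C1) or (C2) (with $\beta_\infty$ in place of $\beta_0$) to guarantee that both $s = \Lop^{-1}w$ and the S$\alpha$S solution $s_{\Lop,\beta_\infty}$ with $\alpha = \beta_\infty$ are well-defined generalized random processes. Then, by the L\'evy continuity theorem (Theorem~\ref{theo:BoulicautLevy}), the claimed convergence in law \eqref{eq:sfine} reduces to the pointwise convergence
\begin{equation*}
	\log \CF_{a^{\gamma + d\left( 1/\beta_\infty - 1 \right)} s(\cdot / a)}(\varphi) \underset{a \rightarrow \infty}{\longrightarrow} - C \lVert \Top \varphi \rVert_{\beta_\infty}^{\beta_\infty}
\end{equation*}
for every $\varphi \in \S(\R^d)$. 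The same manipulation as in the coarse-scale proof --- using $\langle s, \varphi \rangle = \langle w, \Top \varphi \rangle$, the $(-\gamma)$-homogeneity of $\Top$, and the change of variables $\bm{y} = a\bm{x}$ --- rewrites the left-hand side as $\int_{\R^d} a^{-d} \Psi\bigl( a^{d/\beta_\infty} \Top\varphi(\bm{y}) \bigr) \drm \bm{y}$.

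The integrand converges pointwise: since $\Psi(\xi) \underset{\infty}{\sim} -C\abs{\xi}^{\beta_\infty}$ and now $a^{d/\beta_\infty}\Top\varphi(\bm{y}) \to \infty$ as $a \to \infty$ (for $\bm{y}$ with $\Top\varphi(\bm{y}) \neq 0$), we get $a^{-d}\Psi\bigl(a^{d/\beta_\infty}\Top\varphi(\bm{y})\bigr) \to -C\abs{\Top\varphi(\bm{y})}^{\beta_\infty}$; at points where $\Top\varphi(\bm{y}) = 0$ both sides vanish. To pass the limit under the integral I would again split according to (C1) and (C2). Under (C2), $\beta_\infty$-admissibility gives the global bound $\abs{\Psi(\xi)} \le C'\abs{\xi}^{\beta_\infty}$, so $\abs{a^{-d}\Psi(a^{d/\beta_\infty}\Top\varphi(\bm{y}))} \le C'\abs{\Top\varphi(\bm{y})}^{\beta_\infty}$, which is integrable because $\Top\varphi \in L^{\beta_\infty}(\R^d)$, and dominated convergence applies. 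Under (C1), $\Top\varphi \in \mathcal{R}(\R^d) \subset L^2(\R^d)\cap L^\epsilon(\R^d)$; here the relevant regime is $a \ge 1$, so the argument $a^{d/\beta_\infty}\Top\varphi(\bm{y})$ is large, and I would use the asymptotic bound from Theorem~\ref{theo:CFonS}/\cite[Corollary 1]{Fageot2014}, namely $\abs{\Psi(\xi)} \le C(\abs{\xi}^\epsilon + \abs{\xi}^2)$ for some $\epsilon > 0$, which after rescaling yields $\abs{a^{-d}\Psi(a^{d/\beta_\infty}\Top\varphi(\bm{y}))} \le C(a^{d(\epsilon/\beta_\infty - 1)}\abs{\Top\varphi(\bm{y})}^\epsilon + a^{d(2/\beta_\infty - 1)}\abs{\Top\varphi(\bm{y})}^2)$; this is not uniformly dominated in general, so some care is needed.

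The main obstacle, and the point where the fine-scale argument genuinely departs from the coarse-scale one, is precisely this domination step under (C1): the exponents $d(\epsilon/\beta_\infty - 1)$ and $d(2/\beta_\infty - 1)$ need not be nonnegative, so the crude global bound on $\Psi$ does not yield an $a$-uniform integrable majorant as $a \to \infty$. The way around it is to combine the two available bounds on $\Psi$ --- the local behavior $\abs{\Psi(\xi)} \le C'\abs{\xi}^{\beta_\infty}$ valid for $\abs{\xi}$ small (from $\Psi(\xi)\underset{\infty}{\sim}\dots$ being irrelevant near $0$, one instead uses continuity plus the growth bound, but what one really needs is a bound like $\abs{\Psi(\xi)} \le C''\abs{\xi}^{\beta_\infty}$ for $\abs{\xi} \ge 1$ which follows directly from $\Psi(\xi)\underset{\infty}{\sim} -C\abs{\xi}^{\beta_\infty}$) together with continuity of $\Psi$ on the compact range of small values, so that one obtains $\abs{\Psi(\xi)} \le C''(\abs{\xi}^{\beta_\infty} + \abs{\xi}^2)$ \emph{or better} a bound of the form $\abs{\Psi(\xi)} \le C''\max(\abs{\xi}^{\beta_\infty}, \abs{\xi}^2)$ globally; then for $\bm{y}$ with $\abs{\Top\varphi(\bm{y})} \le \lVert\Top\varphi\rVert_\infty$ and $a \ge 1$, one separates the set where $a^{d/\beta_\infty}\abs{\Top\varphi(\bm{y})} \le 1$ (there the $\abs{\cdot}^{\beta_\infty}$ piece dominates and gives $\le C''\abs{\Top\varphi(\bm{y})}^{\beta_\infty}$) from its complement and checks integrability on each piece using $\Top\varphi \in \mathcal{R}(\R^d)$, which decays faster than any polynomial; a Fatou-type argument or a refined dominated-convergence argument on these two pieces then closes the proof. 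Finally, the limit $s_{\Lop,\beta_\infty}$ is self-similar of order $H_{\loc} = \gamma + d(1/\beta_\infty - 1)$ by Proposition~\ref{prop:selfsim}, and by Proposition~\ref{prop:Hinftyforce} this justifies calling $s$ locally self-similar of that order.
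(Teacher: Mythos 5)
Your overall strategy is the paper's: reduce to pointwise convergence of the log-characteristic functionals, rewrite the integral as $\int_{\R^d} a^{-d}\Psi(a^{d/\beta_\infty}\Top\varphi(\bm{y}))\,\drm\bm{y}$, handle (C2) by $\beta_\infty$-admissibility and dominated convergence, and under (C1) split the domain at the threshold $a^{d/\beta_\infty}\abs{\Top\varphi(\bm{y})} = 1$. The large-argument piece is treated exactly as in the paper (the asymptotic $\Psi(\xi)\underset{\infty}{\sim}-C\abs{\xi}^{\beta_\infty}$ gives $\abs{\Psi(\xi)}\leq C'\abs{\xi}^{\beta_\infty}$ for $\abs{\xi}\geq 1$, and $\abs{\Top\varphi}^{\beta_\infty}$ is an integrable majorant).

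There is, however, a genuine flaw in your treatment of the small-argument piece. You claim that on the set $\{a^{d/\beta_\infty}\abs{\Top\varphi(\bm{y})}\leq 1\}$ "the $\abs{\cdot}^{\beta_\infty}$ piece dominates and gives $\leq C''\abs{\Top\varphi(\bm{y})}^{\beta_\infty}$." This presupposes a bound $\abs{\Psi(\xi)}\leq C''\abs{\xi}^{\beta_\infty}$ for $\abs{\xi}\leq 1$, which is not available: the hypothesis only controls $\Psi$ at infinity, and near the origin one may well have $\Psi(\xi)\underset{0}{\sim}-C_0\abs{\xi}^{\beta_0}$ with $\beta_0<\beta_\infty$ (e.g.\ a layered stable noise with $\beta<\alpha$), in which case $\abs{\Psi(\xi)}\gg\abs{\xi}^{\beta_\infty}$ for small $\xi$ and no such domination holds. (A Fatou argument does not rescue this either, since one needs the limit of the integral, not an inequality, and $\Psi$ is not of one sign in the required sense.) The correct argument — and this is precisely where the fine-scale proof departs from a pure domination scheme — is to show that this piece \emph{vanishes} as $a\to\infty$ rather than to dominate it uniformly: using the tempered-noise bound $\abs{\Psi(\xi)}\leq 2C'\abs{\xi}^{\epsilon}$ for $\abs{\xi}\leq 1$ with $\epsilon$ chosen strictly smaller than $\beta_\infty$, the integral over the small-argument set is bounded by $2C'a^{d(\epsilon/\beta_\infty-1)}\lVert\Top\varphi\rVert_\epsilon^\epsilon$, and the exponent $d(\epsilon/\beta_\infty-1)$ is negative precisely because $\epsilon<\beta_\infty$, so this term tends to $0$. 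You had all the ingredients on the table (you cite the $\abs{\xi}^\epsilon+\abs{\xi}^2$ bound and even compute the exponent $d(\epsilon/\beta_\infty-1)$), but you did not assemble them into this vanishing estimate, and the substitute domination you propose is false in general.
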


\begin{proof}
	The proof is similar to the one of Theorem \ref{theo:coarsescale}, and we only develop the parts that differ.
	If $\Top$ and $\Psi$ satisfy (C2), the proof follows exactly the line of Theorem \ref{theo:coarsescale}. We should therefore assume that $\Top$ maps continuously  $\S(\R^d)$ to $\mathcal{R}(\R^d)$. 
	Restarting from \eqref{eq:intermediaries}  with $\beta_\infty$ instead of $\beta_0$, we split the integral into two parts and  get
	\begin{small}
	\begin{align}  \label{eq:decomposeIJ}
		\log \CF_{a^{\gamma + d\left(  {1}/{{\beta_\infty}} - 1 \right)} s(\cdot / a) }(\varphi)  & =   \int_{\R^d} \One_{\abs{ \Top \varphi(\bm{y})} a^{d/{\beta_\infty}} \geq 1} a^{-d} \Psi( a^{d/{\beta_\infty}}  \Top  \varphi(\bm{y}) ) \drm \bm{y} +  \int_{\R^d}  \One_{\abs { \Top  \varphi(\bm{y})} a^{d/{\beta_\infty}} < 1} a^{-d} \Psi( a^{d/{\beta_\infty}} \Top  \varphi(\bm{y}) ) \drm \bm{y} \nonumber  \\
		&:= I(a) + J(a).
	\end{align} 
	\end{small}
		\textit{Control of $I(a)$:} We have, by assumption on $\Psi$, that 
$\One_{\abs{ \Top  \varphi(\bm{y})} a^{d/{\beta_\infty}} \geq 1} a^{-d} \Psi(a^{d/{\beta_\infty}} \Top  \varphi(\bm{y}))  \underset{a\rightarrow \infty}{\longrightarrow} - C \abs{ \Top \varphi(\bm{y})}^{\beta_\infty}.$ 	Moreover, since the continuous function $\Psi$ behaves like $(-C\abs{\cdot}^{\beta_\infty})$ at infinity, there exists a constant $C'$ such that $\abs{\Psi(\xi)} \leq C' \abs{\xi}^{\beta_\infty}$ for every  $\abs{\xi} \geq 1$. Moreover, the function $\Top \varphi$, which is  in $\mathcal{R}(\R^d)$, is bounded. Hence, we have, when $a\geq \lVert \Top \varphi \rVert_\infty^{-1}$, that 
$	\abs{ \One_{\abs{ \Top  \varphi(\bm{y})} a^{d/{\beta_\infty}} \geq 1} a^{-d} \Psi(a^{d/{\beta_\infty}} \Top  \varphi(\bm{y}))} \leq C' \abs{\Top  \varphi(\bm{y})}^{\beta_\infty} $	for all $\bm{y}\in \R^d$. The function on the right is integrable, therefore the Lebesgue dominated convergence applies and we obtain that $I(a) \underset{a\rightarrow \infty}{\longrightarrow} - C \lVert \Top  \varphi \rVert_{\beta_\infty}^{\beta_\infty}$. 
		
\noindent	\textit{Control of $J(a)$:}
	As seen in \eqref{eq:boundCFw1w2}, there exists $C'>0$ and $\epsilon > 0$ such that $\abs{\Psi(\xi)} \leq C' (\abs{\xi}^{\epsilon} + \abs{\xi}^2)$. 
	Without loss of generality, one can choose $\epsilon < \beta_\infty$. 
	Then, for $\abs{\xi} \leq 1$, we have $\abs{\Psi(\xi) }\leq 2 C' \abs{\xi}^\epsilon$ and, therefore, 
	\begin{equation}
		\abs{ \int_{\R^d} 	\One_{\abs{\Top\varphi(\bm{y})} a^{d/{\beta_\infty}} < 1} a^{-d}  {\Psi( a^{d/{\beta_\infty}} \Top \varphi(\bm{y}) )} \drm \bm{y}} \leq  2 C' a^{d( \epsilon / {\beta_\infty} - 1)}  \lVert \Top \varphi \rVert^{\epsilon}_{\epsilon}.
	\end{equation}
	Since $\mathcal{R}(\R^d) \subset L^{\epsilon}(\R^d)$ and $\epsilon < \beta_\infty$, we have $ \lVert \Top \varphi \rVert^{\epsilon}_{\epsilon}< \infty$ and $a^{d( \epsilon / {\beta_\infty} - 1)} \underset{a\rightarrow \infty}{\longrightarrow} 0$, which implies  that $J(a) \underset{a\rightarrow \infty}{\longrightarrow} 0$. We have shown that 
$\log \CF_{a^{\gamma + d\left(  {1}/{{\beta_\infty}} - 1 \right)} s(\cdot / a) }(\varphi) = I(a) + J(a) \underset{a \rightarrow \infty}{\longrightarrow} - C \lVert  \Top \varphi \rVert_{\beta_\infty}^{\beta_\infty}$, 
	as expected. 
	Finally, the limit process $s_{\Lop, \beta_\infty}$ is self-similar with order $H_{\loc} = \gamma + d\left( \frac{1}{\beta_\infty} - 1 \right)$ according to Proposition \ref{prop:selfsim}. 
\end{proof}

	\subsection{Discussion and Converse Results}\label{sec:necessity}
	
	In this section, we investigate the generality of our results by questioning the hypotheses in Theorems \ref{theo:coarsescale} and \ref{theo:finescale}.  One should only consider the case of $\gamma$-homogeneous $\Lop$ operators whose adjoint have a $(-\gamma)$-homogeneous stable inverse $\Top$. We start with preliminary results.
	
	\begin{itemize}
	
	\item The renormalization procedures in Theorems \ref{theo:coarsescale} and \ref{theo:finescale} have to be compared with the index $H = \gamma + d( 1 / \alpha - 1)$ of a $\gamma$-order self-similar process (see Proposition \ref{prop:selfsim}). In particular, the $\gamma$-order linear processes studied in this section are asymptotically  self-similar with index $\gamma + d( 1 / \beta_{0/\infty} - 1)$, where $\beta_{0/\infty} = \beta_0$ or $\beta_\infty$. One can say that the lack of self-similarity of $s$ vanishes asymptotically or locally.
		
	\item (C1) has to be understood as the sufficient assumption  on the operator $\Top$ such that the process $s$ with characteristic functional $\CF_w(\Top \varphi)$ is well defined without any  additional assumption on the L\'evy white noise $w$. Therefore, (C1) is restrictive for the operator but applies to any L\'evy noise.

	\item  This is in contrast to (C2). Here,  the restriction on $\Top$ is minimal since the process $s_{\Lop, \beta_{0/\infty}}$ should be well defined, and, therefore, $\Top$ should at least map $\S(\R^d)$ into $L^{\beta_{0/\infty}}(\R^d)$. It means that (C2) gives  sufficient assumptions on the L\'evy white noise such that the minimal assumption on $\Top$ is also sufficient.

	\item When the variance of the noise is finite, we have in particular that $\beta_0 = 2$. Under the assumptions of Theorem \ref{theo:coarsescale}, the process $a^{\gamma - d/2} s(\cdot / a)$ converges to a Gaussian self-similar process. This can be seen as a central limit theorem for $\gamma$-order finite-variance linear processes. This finite-variance result was already established in our previous work \cite[Theorem 4.2]{Fageot2015wavelet}.  Theorem \ref{theo:coarsescale} is a generalization for the infinite-variance case. 

		\item For important classes of L\'evy white noises, the parameter $\beta_\infty$ is $0$ and Theorem \ref{theo:finescale} does not apply. This includes (generalized) Laplace white noises and compound-Poisson white noises (see Section \ref{subsec:examplesnoises}). In that case, one does not expect the underlying process to be locally self-similar. This is made more precise and proved when $\Lop$ satisfies (C1)  in Proposition \ref{prop:betanull}.
			
\end{itemize}

\begin{proposition} \label{prop:betanull}
	Let $w$ be a white noise with Blumenthal-Getoor index $\beta_\infty = 0$. 
	Assume that $\Lop$ is a \mbox{$\gamma$-homogeneous} operator and that there exists a $(-\gamma)$-homogeneous left inverse $\Top$ of $\Lop^*$ that is continuous from $\S(\R^d)$ to $\mathcal{R}(\R^d)$. Let $s = \Lop^{-1} w$ be the $\gamma$-order linear process with characteristic functional $\CF_s(\varphi) = \CF_w(\Top \varphi)$. Then, for  every $H \in \R$,
$a^H s (\cdot / a) \underset{a \rightarrow \infty}{\overset{(d)}{\longrightarrow}} 0$. 
\end{proposition}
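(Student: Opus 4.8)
The plan is to invoke the L\'evy continuity theorem (Theorem~\ref{theo:BoulicautLevy}): the characteristic functional of the deterministic zero process is identically $1$, so it suffices to prove that $\CF_{a^H s(\cdot/a)}(\varphi)\to 1$ as $a\to\infty$ for every fixed $\varphi\in\S(\R^d)$, and for that it is enough to show $\log\CF_{a^H s(\cdot/a)}(\varphi)\to 0$. First I would compute this quantity exactly as in the proofs of Theorems~\ref{theo:coarsescale} and~\ref{theo:finescale}: from $\langle a^H s(\cdot/a),\varphi\rangle = a^{H+d}\langle s,\varphi(a\cdot)\rangle = a^{H+d}\langle w,\Top\{\varphi(a\cdot)\}\rangle$, the $(-\gamma)$-homogeneity of $\Top$ (which gives $\Top\{\varphi(a\cdot)\} = a^{-\gamma}(\Top\varphi)(a\cdot)$), the integral form of $\CF_w$ valid on $\mathcal{R}(\R^d)$ (Proposition~\ref{prop:Rcontinuous}), and the substitution $\bm{y}=a\bm{x}$, one obtains
\[
\log\CF_{a^H s(\cdot/a)}(\varphi) = a^{-d}\int_{\R^d}\Psi\bigl(a^{\kappa}\psi(\bm{y})\bigr)\,\drm\bm{y},
\qquad \kappa := H+d-\gamma,\quad \psi := \Top\varphi \in \mathcal{R}(\R^d).
\]

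The crucial observation is that $\beta_\infty=0$ makes $\Psi$ \emph{$p$-admissible for every sufficiently small} $p>0$. Indeed, $w$ being tempered, \cite[Corollary~1]{Fageot2014} gives $\abs{\Psi(\xi)}\leq C(\abs{\xi}^\epsilon+\abs{\xi}^2)$ for some $\epsilon>0$, which we may take $\leq 2$; thus $\abs{\Psi(\xi)}\leq 2C\abs{\xi}^p$ for $\abs{\xi}\leq 1$ as soon as $p\leq\epsilon$. For $\abs{\xi}\geq 1$, the definition of $\beta_\infty=0$ provides, for each $p>0$, constants $C_p,R_p$ with $\abs{\Psi(\xi)}\leq C_p\abs{\xi}^p$ for $\abs{\xi}\geq R_p$, and the continuity of $\Psi$ on the compact set $\{1\leq\abs{\xi}\leq R_p\}$ (on which $\abs{\xi}^p\geq 1$) closes the gap. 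Hence, for every small $p>0$, there exists $C_p'>0$ with $\abs{\Psi(\xi)}\leq C_p'\abs{\xi}^p$ for all $\xi\in\R$.

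With this in hand, and since $\psi\in\mathcal{R}(\R^d)\subset L^p(\R^d)$, I would estimate
\[
\abs{\log\CF_{a^H s(\cdot/a)}(\varphi)} \leq a^{-d}\int_{\R^d}\abs{\Psi\bigl(a^{\kappa}\psi(\bm{y})\bigr)}\,\drm\bm{y} \leq C_p'\,a^{\kappa p - d}\,\lVert\psi\rVert_p^p .
\]
It then remains to choose $p>0$ small enough to be admissible and, in addition, small enough that $\kappa p<d$; this is always possible, since $\kappa p<d$ holds automatically when $\kappa\leq 0$ and for any $p<d/\kappa$ when $\kappa>0$. For such $p$ one has $a^{\kappa p-d}\to 0$ as $a\to\infty$, whence $\log\CF_{a^H s(\cdot/a)}(\varphi)\to 0$ and $\CF_{a^H s(\cdot/a)}(\varphi)\to 1$. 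Since $\varphi\in\S(\R^d)$ is arbitrary, Theorem~\ref{theo:BoulicautLevy} yields $a^H s(\cdot/a)\overset{(d)}{\longrightarrow}0$, as claimed.

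The one genuinely delicate step is this last choice of exponent: one must be able to take $p$ below the threshold $d/\kappa$, which may be arbitrarily small since $\kappa=H+d-\gamma$ is unbounded in $H$, while still keeping $\Psi$ $p$-admissible. This is exactly what the hypothesis $\beta_\infty=0$ buys us, and it would break down for $\beta_\infty>0$ — in agreement with the nontrivial local limit obtained in Theorem~\ref{theo:finescale}. Everything else is a routine dominated-growth estimate, parallel to the control of the term $J(a)$ in the proof of Theorem~\ref{theo:finescale}.
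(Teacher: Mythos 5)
Your proof is correct and follows essentially the same route as the paper's: both reduce to $\log\CF_{a^H s(\cdot/a)}(\varphi)=a^{-d}\int\Psi(a^{\kappa}\Top\varphi(\bm{y}))\,\drm\bm{y}$, use temperedness near the origin together with $\beta_\infty=0$ at infinity to get a global bound $\abs{\Psi(\xi)}\leq C_p\abs{\xi}^p$ for arbitrarily small $p$, and then pick $p$ small enough that the resulting power of $a$ is negative. Your version is in fact slightly more careful than the paper's, which drops the $-\gamma$ from the exponent (harmless, since $H$ is arbitrary).
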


\begin{proof}
	Due to Theorem \ref{theo:BoulicautLevy}, we have to show that, for every $\varphi \in \S(\R^d)$, $\log \CF_{a^H s(\cdot /a)} (\varphi) \underset{a\rightarrow \infty}{\longrightarrow} 0$.
	Proceeding as in Theorem \ref{theo:coarsescale}, we easily show that
	\begin{equation} \label{eq:betainfnullinterm}
	\log \CF_{a^H s(\cdot /a)} (\varphi) = \int_{\R^d} a^{-d} \Psi(a^{d+H} \Top \varphi (\bm{y})) \drm \bm{y}.
	\end{equation}
	According to \eqref{eq:boundCFw1w2}, there exists $\epsilon,C' > 0$ such that $\abs{\Psi(\xi)}\leq C' 	\abs{\xi}^\epsilon$ for $\abs{\xi} \leq 1$. Without loss of generality, one can assume that $\epsilon < \frac{d}{d + \abs{H}}$. This implies in particular that $\epsilon ( d + H) - d < 0$. The knowledge  that $\beta_\infty = 0$ is enough to deduce that $\Psi(\xi)$ is also dominated by $\abs{\xi}^\epsilon$ for $\abs{\xi}\geq 1$. Thus, there exists $C>0$ such that
	$\abs{\Psi(\xi)} \leq C \abs{\xi}^\epsilon$ for every $\xi \in \R$. Restarting from \eqref{eq:betainfnullinterm}, we obtain that
	\begin{equation}
	\abs{\log \CF_{a^H s(\cdot /a)} (\varphi)} \leq C \int_{\R^d} a^{\epsilon(d+H) - d} \abs{\Top \varphi (\bm{y})}^\epsilon \drm \bm{y}  = C \lVert \Top \varphi \rVert_\epsilon^\epsilon a^{\epsilon(d+H)  - d},
	\end{equation} 
	which  vanishes when $a\rightarrow \infty$ due to our choice for $\epsilon$. This concludes the proof. 
\end{proof}

In Theorem \ref{theo:coarsescale} and \ref{theo:finescale}, we assume some asymptotic behaviors of the L\'evy exponent at $0$ or at $\infty$. We see here that under reasonable conditions, this assumption is necessary for \eqref{eq:scoarse} or \eqref{eq:sfine} to occur. 

\begin{proposition}
Let $s = \Lop^{-1} w$ be a $\gamma$-order linear process with $\gamma \geq 0$. We also assume  that $s$ behaves at coarse scale as 
\begin{equation} \label{eq:asymptoticalphastable}
a^{H_\infty} s(\cdot / a) \underset{a\rightarrow 0}{\longrightarrow} s_{\Lop,\alpha}
\end{equation}
 where $H_\infty \in \R$ and $\Lop s_{\Lop,\alpha} = w_\alpha$ is a S$\alpha$S white noise with $0< \alpha \leq 2$. Then, $H_\infty = \gamma + d(1 / \alpha - 1)$.  If moreover $\Psi$ is bounded by $\abs{\cdot}^\alpha$ at the origin, then $\Psi(\xi) \underset{0}{\sim} - C \abs{\xi}^\alpha$ for some $C>0$.  

\noindent Assume now that $s$ behaves at fine scale as 
\begin{equation} \label{eq:localalphastable}
a^{H_\loc} s(\cdot / a) \underset{a\rightarrow \infty}{\longrightarrow} s_{\Lop,\alpha}
\end{equation}
where $H_\loc \in \R$ and $\Lop s_{\Lop,\alpha} = w_\alpha$ is a S$\alpha$S white noise with $0< \alpha \leq 2$. Then, $H_\loc = \gamma + d(1 / \alpha - 1)$.  If moreover $\Psi$ is bounded by $\abs{\cdot}^\alpha$ at $\infty$, then $\Psi(\xi) \underset{\infty}{\sim} - C \abs{\xi}^\alpha$ for some $C>0$.  
\end{proposition}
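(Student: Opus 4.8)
The plan is to route everything through characteristic functionals and exploit scaling. By Theorem~\ref{theo:BoulicautLevy}, the convergence \eqref{eq:asymptoticalphastable} is equivalent to $\CF_{a^{H_\infty}s(\cdot/a)}(\varphi)\to\CF_{s_{\Lop,\alpha}}(\varphi)$ for every $\varphi\in\S(\R^d)$. Running verbatim the computation in the proof of Theorem~\ref{theo:coarsescale} (using $\CF_s(\varphi)=\CF_w(\Top\varphi)$, the $(-\gamma)$-homogeneity of $\Top$, and the substitution $\bm{y}=a\bm{x}$) gives, with $\delta:=H_\infty-\gamma$,
\begin{equation}\label{eq:converseidentity}
\log\CF_{a^{H_\infty}s(\cdot/a)}(\varphi)=\int_{\R^d}a^{-d}\,\Psi\bigl(a^{d+\delta}\Top\varphi(\bm{y})\bigr)\,\drm\bm{y},
\end{equation}
and the right-hand side is forced to tend to $\log\CF_{s_{\Lop,\alpha}}(\varphi)=-C\lVert\Top\varphi\rVert_\alpha^\alpha$, since $s_{\Lop,\alpha}=\Lop^{-1}w_\alpha$ is the $\gamma$-order linear process driven by the S$\alpha$S noise $w_\alpha$. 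Since $w_\alpha$, hence $s_{\Lop,\alpha}$, is non-trivial, $\Top\not\equiv 0$, so I fix $\varphi_0$ with $\psi_0:=\Top\varphi_0\not\equiv 0$ and set $c:=C\lVert\psi_0\rVert_\alpha^\alpha>0$.

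For the value of $H_\infty$ I would argue softly. By Proposition~\ref{prop:Hinftyforce} the limit $s_{\Lop,\alpha}$ is self-similar of order $H_\infty$, while by Proposition~\ref{prop:selfsim} it is self-similar of order $\gamma+d(1/\alpha-1)$; and a non-trivial self-similar process has a single Hurst exponent (if $a^{H}s_*(\cdot/a)\overset{(d)}{=}s_*\overset{(d)}{=}a^{H'}s_*(\cdot/a)$ for all $a>0$, then $a^{H-H'}\langle s_*,\chi\rangle\overset{(d)}{=}\langle s_*,\chi\rangle$ for every $\chi\in\S(\R^d)$ and $a>0$, which is impossible for non-degenerate $\langle s_*,\chi\rangle$ unless $H=H'$). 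Hence $H_\infty=\gamma+d(1/\alpha-1)$. A self-contained alternative: apply \eqref{eq:converseidentity} to $\lambda\varphi_0$ and substitute $a\mapsto at$; letting $a\to 0$ yields $c\lambda^\alpha t^{\alpha(d+\delta)-d}=c\lambda^\alpha$ for all $t,\lambda>0$, whence $\alpha(d+\delta)=d$. The fine-scale statement \eqref{eq:localalphastable} is handled identically with $a\to\infty$, giving $H_\loc=\gamma+d(1/\alpha-1)$.

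Granting now $H_\infty=\gamma+d(1/\alpha-1)$, one has $d+\delta=d/\alpha$, and with $m(\eta):=\Psi(\eta)/\lvert\eta\rvert^\alpha$ — which is bounded near the origin precisely by the extra hypothesis — identity \eqref{eq:converseidentity} and its limit rewrite as
\begin{equation}\label{eq:converseb}
\int_{\R^d}\bigl[\,m\bigl(a^{d/\alpha}\Top\varphi(\bm{y})\bigr)+C\,\bigr]\,\lvert\Top\varphi(\bm{y})\rvert^\alpha\,\drm\bm{y}\;\underset{a\to 0}{\longrightarrow}\;0,\qquad\varphi\in\S(\R^d),
\end{equation}
with the convention that the integrand vanishes where $\Top\varphi$ does. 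As $a\to 0$ the argument $a^{d/\alpha}\Top\varphi(\bm{y})$ enters the region where $\lvert m\rvert\leq C_1$ ($\Top\varphi$ being bounded, as in the proof of Theorem~\ref{theo:coarsescale}), so for small $a$ the integrand is dominated by $(C_1+C)\lvert\Top\varphi(\bm{y})\rvert^\alpha\in L^1(\R^d)$. I would then run a compactness argument: along any $a_n\to 0$, extract a subsequence with $m(a_{n_k}^{d/\alpha}\,\cdot\,)\to m_*$ (the uniform bound near $0$ supplying the compactness); dominated convergence in \eqref{eq:converseb} forces $\int_{\R^d}[m_*(\Top\varphi(\bm{y}))+C]\lvert\Top\varphi(\bm{y})\rvert^\alpha\,\drm\bm{y}=0$ for every $\varphi$; and since the image of $\Top$ is rich enough to prescribe an arbitrary value on a set of positive Lebesgue measure (true under (C1) or (C2) for the homogeneous operators at hand, whose $\Top$ has an a.e.\ non-vanishing symbol), this forces $m_*\equiv -C$. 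Every subsequential limit being $-C$, we conclude $m(\eta)\to -C$ as $\eta\to 0$, i.e.\ $\Psi(\xi)\underset{0}{\sim}-C\lvert\xi\rvert^\alpha$ with $C>0$.

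The fine-scale assertion is obtained by the same route with $a\to\infty$ throughout: the argument $a^{d/\alpha}\Top\varphi(\bm{y})$ then escapes to $\infty$, where the hypothesis ``$\Psi$ bounded by $\lvert\cdot\rvert^\alpha$ at $\infty$'' provides the domination, and the identical compactness/richness argument yields $\Psi(\xi)\underset{\infty}{\sim}-C\lvert\xi\rvert^\alpha$. The exponent identification is routine scaling bookkeeping; I expect the main obstacle to be this last, Tauberian-type step — passing from the \emph{integrated} convergence \eqref{eq:converseb} to a \emph{pointwise} statement on $\Psi$ — where the boundedness hypothesis is indispensable (it provides both the domination in \eqref{eq:converseb} and the compactness of the rescaled family $a^{-d}\Psi(a^{d/\alpha}\,\cdot\,)$). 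If the compactness/richness route looks too soft in full generality, a fallback is to insert the L\'evy–Khintchine representation of $\Psi$ into \eqref{eq:converseidentity}: the hypothesis $\beta_0\geq\alpha$ (resp.\ $\beta_\infty\leq\alpha$) bounds the relevant part of the L\'evy measure, and \eqref{eq:converseb} then pins it to the $\alpha$-stable profile, which is equivalent to $\Psi(\xi)\underset{0/\infty}{\sim}-C\lvert\xi\rvert^\alpha$.
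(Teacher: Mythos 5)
Your identification of $H_\infty$ and $H_{\mathrm{loc}}$ is correct and is essentially the paper's argument (Propositions \ref{prop:Hinftyforce} and \ref{prop:selfsim} plus uniqueness of the Hurst exponent of a non-degenerate self-similar process; your alternative scaling computation works too). The second half, however, has a genuine gap, and it sits exactly at the step you flag as the main obstacle. First, a uniform bound $\abs{m}\leq C_1$ near the origin does \emph{not} make the rescaled family $\eta\mapsto m(a^{d/\alpha}\eta)$ subsequentially compact in any topology strong enough to pass to the limit inside the composition $m(a^{d/\alpha}\Top\varphi(\bm{y}))$: an $L^\infty$ bound gives only weak-$*$ compactness, which does not commute with composition, and pointwise subsequential limits of a merely bounded family need not exist. (The paper's own citation of L\'evy exponents oscillating between two power laws is precisely a warning that $m$ can fail to converge, so no soft compactness can manufacture a limit $m_*$.) Second, even if every rescaling converged weak-$*$ to the constant $-C$, that only controls \emph{averages} of $m$ over shrinking neighbourhoods of $0$, which is strictly weaker than $\Psi(\xi)\underset{0}{\sim}-C\abs{\xi}^\alpha$; this is the Tauberian step, and nothing in your argument supplies the Tauberian ingredient. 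Third, the ``richness'' of the image of $\Top$ cannot rescue this: $\Top\varphi$ lies in $\mathcal{R}(\R^d)$ or $L^p(\R^d)$, so it decays and takes a continuum of values, and your integrated convergence therefore always mixes $m$ at a continuum of arguments and never isolates $m$ at a single point.

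The paper's proof avoids all of this by a different reduction. Applying $\Lop$ to both sides of \eqref{eq:asymptoticalphastable} and using $\gamma$-homogeneity turns the hypothesis into $a^{d(1/\alpha-1)}w(\cdot/a)\rightarrow w_\alpha$, i.e. $\CF_w\bigl(a^{d/\alpha}\varphi(a\cdot)\bigr)\rightarrow\exp\bigl(-C\lVert\varphi\rVert_\alpha^\alpha\bigr)$ for $\varphi\in\S(\R^d)$. This convergence is then extended from $\S(\R^d)$ to $\mathcal{R}(\R^d)$ by a three-term density estimate, where the hypothesis that $\Psi$ is bounded by $\abs{\cdot}^\alpha$ at the origin yields
\begin{equation*}
\bigl\lvert \CF_w\bigl(a^{d/\alpha}f(a\cdot)\bigr)-\CF_w\bigl(a^{d/\alpha}\varphi(a\cdot)\bigr)\bigr\rvert \leq C'\lVert f-\varphi\rVert_\alpha^\alpha
\end{equation*}
uniformly in $a$. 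Finally one tests with $f=\pm\One_{[0,1]^d}$, for which the integral collapses to the single value $a^{-d}\Psi(\pm a^{d/\alpha})\rightarrow -C$; setting $\xi=\pm a^{d/\alpha}$ gives the pointwise asymptotics directly, with no averaging and hence no Tauberian issue. You should replace your compactness/richness step by this indicator-function evaluation (your L\'evy--Khintchine fallback is likewise only a sketch and would need the same kind of localization to be made rigorous); the fine-scale case then goes through with the $I(a)+J(a)$ splitting of Theorem \ref{theo:finescale} to handle the small values of the argument.
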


\begin{proof}
Due to \eqref{eq:asymptoticalphastable}, $s$ is asymptotically self-similar, hence its limit is self-similar of order $H_\infty$ (Proposition \ref{prop:Hinftyforce}). We also now that $s_{\Lop,\alpha}$ is self-similar of order $\gamma + d(1/\alpha-1)$ with Proposition \ref{prop:selfsim}. Thus, $H_\infty = \gamma + d(1/\alpha-1)$.

By $\gamma$-homogeneity, we have $\Lop \{ a^{\gamma + d(1/\alpha -1)} s(\cdot / a) \} = a^{d(1/\alpha -1)} w(\cdot / a)$. Hence, applying the linear operator $\Lop$ each side, \eqref{eq:asymptoticalphastable} implies that $a^{d(1/\alpha - 1)} w(\cdot / a)$ converges in law to $w_\alpha$. In particular, $\CF_w(a^{d/\alpha}\varphi(a\cdot) )$ converges to $\CF_{w_\alpha}(\varphi) = \exp(- C \lVert \varphi \rVert_\alpha^\alpha)$ for $\varphi \in \S(\R^d)$. We show now that this convergence can be extended to functions $f \in \mathcal{R}(\R^d)$. Indeed, for $a >0$, $f \in \mathcal{R}(\R^d)$, $\varphi \in \S(\R^d)$, we have
\begin{small}
\begin{align} \label{eq:decomposepourconverse}
\abs{\CF_w(a^{d/\alpha}f(a\cdot) ) - \CF_{w_\alpha}(f) } 
&\leq 
\abs{\CF_w(a^{d/\alpha} f(a\cdot) ) - \CF_w( a^{d/\alpha} \varphi(a\cdot) ) }
+ 
\abs{\CF_w(a^{d/\alpha} \varphi(a \cdot) ) - \CF_{w_\alpha} (\varphi) }
+
\abs{ \CF_{w_\alpha} (\varphi) -  \CF_{w_\alpha} (f)} \nonumber \\
&= (i) + (ii) + (iii)
\end{align}
\end{small}
Using the arguments of the proof of Proposition \ref{prop:Rcontinuous} and Theorem \ref{theo:coarsescale}, we  see that 
$
(iii)\leq 2 \abs{1 - \CF_{w_\alpha} (f - \varphi) } \leq 2 C \lVert f - \varphi \rVert_\alpha^\alpha
$.
With the same ideas, we have
\begin{align} \label{eq:intermtobound}
	(i) 
	&	\leq 
	2 \abs{1 - \CF_w( a^{d/\alpha} (f(a\cdot ) - \varphi (a \cdot ) ) ) }
	\leq 2 \int_{\R^d} a^{-d} \abs{\Psi( a^{d/\alpha} ( f(\bm{y}) - \varphi(\bm{y}) ) ) } \mathrm{d}\bm{y} 
\leq C' \lVert f - \varphi \rVert_\alpha^\alpha,
\end{align}
where the last inequality is obtained by exploiting that $\abs{\Psi}$ is bounded by $\abs{\cdot}^\alpha$ at the origin.  
The second term $(ii)$ vanishes when $a \rightarrow 0$ for $\varphi \in \S(\R^d)$ fixed. It means it suffices to select $\varphi \in \S(\R^d)$ such that $\lVert f - \varphi \rVert_\alpha^\alpha$ is small and then $a>0$ such that $(ii)$ is small (this is possible because $\varphi \in \S(\R^d)$, hence $(ii)$ vanishes when $a \rightarrow 0$) to make $\abs{\CF_w(a^{d/\alpha}f(a\cdot) ) - \CF_{w_\alpha}(f) } $ arbitrarily small as expected. 

\noindent Let us now consider $f = \One_{[0,1]^d}$. Then, we have
$\log \CF_w(a^{d/\alpha} f(a\cdot ) = a^{-d} \Psi(a^{d/\alpha}) \underset{a\rightarrow 0 }{\longrightarrow} \log \CF_{w_\alpha}(f) = - C$. With $ f = - \One_{[0,1]^d}$, we have similarly that $a^{-d} \Psi( - a^{d/\alpha})$ converges to $-C$. Finally, setting $\xi= \pm a^{d/\alpha}$, we obtain that $\Psi(\xi) \underset{\abs{\xi}\rightarrow 0}{\sim} - C \abs{\xi}^\alpha$.

\noindent The proof for the local case is very similar. The only difference is for the control of $K(a):=\int_{\R^d} a^{-d} \abs{\Psi( a^{d/\alpha} ( f(\bm{y}) - \varphi(\bm{y}) ) ) } \mathrm{d}\bm{y}$ in \eqref{eq:intermtobound} when $a\rightarrow \infty$. Then, the result follows from the same decomposition and arguments used in \eqref{eq:decomposeIJ}. Indeed, we have
\begin{small}
\begin{align}
K(a) &= \int_{\R^d} \One_{ a^{d/\alpha} \abs{ f(\bm{y}) - \varphi(\bm{y}) } \geq 1} a^{-d} \abs{\Psi( a^{d/\alpha} ( f(\bm{y}) - \varphi(\bm{y}) ) ) } \mathrm{d}\bm{y}  + \int_{\R^d} \One_{ a^{d/\alpha} \abs{f(\bm{y}) - \varphi(\bm{y}) } < 1} a^{-d} \abs{\Psi( a^{d/\alpha} ( f(\bm{y}) - \varphi(\bm{y}) ) ) } \mathrm{d}\bm{y}\nonumber \\
& := I(a) + J(a).
\end{align}
\end{small}
We bound $I(a) \leq C \lVert f - \varphi\rVert_\alpha^\alpha$ because $\Psi$ is bounded by $\abs{\cdot}^\alpha$ at infinity. We also have that $J(a)$ vanishes when $a \rightarrow \infty$, as we see by bounding $\abs{\Psi}$ by $\abs{\cdot}^\epsilon$ with $\epsilon < \alpha$ ($\epsilon$ exists because $w$ is tempered, see \eqref{eq:boundCFw1w2}). 
\end{proof}

As a final remark, we point out that there exist L\'evy exponents $\Psi$ that are oscillating between two different power laws at infinity. Some examples are constructed in \cite[Examples 1.1.15 and  1.1.16]{Farkas2001function}. These examples coupled with Proposition \ref{eq:asymptoticalphastable} implies that one cannot hope to have local self-similarity for any $\gamma$-order linear process.

\section{Application to specific classes of SDEs and Simulations} \label{sec:examples}

	\subsection{Examples of L\'evy White Noises} \label{subsec:examplesnoises}
	
	We introduce  classical families  of L\'evy white noises that  allow us to illustrate our results.
	
	\paragraph{From infinitely divisible random variables to  L\'evy white noises.}
	Consider a L\'evy white noise $w$ on $\S'(\R^d)$ and a family of functions $\varphi_n \in \S(\R^d)$ that converges to $\One_{[0,1]^d}$ for the topology of $\mathcal{R}(\R^d)$ (see Section \ref{subsec:operators}). Since the characteristic functional of $w$ is continuous over $\mathcal{R}(\R^d)$ (Proposition \ref{prop:Rcontinuous}), one can  show that the sequence $(\langle w ,\varphi_n\rangle)$ is a Cauchy sequence in $L^0(\Omega)$. It therefore  converges   to some random variable denoted by $X = \langle w, \One_{[0,1]^d} \rangle$. This random variable is infinitely divisible,  with  characteristic function
	\begin{equation}  \label{eq:CFXxi}
		\CF_X(\xi) =   \exp \left( \int_{\R^d} \Psi ( \xi  \One_{[0,1]^d} (\bm{x})) \drm \bm{x} \right) = \exp \left( \Psi(\xi )\right).
	\end{equation}
	The latter equality in \eqref{eq:CFXxi} comes from the fact that $\Psi(0) = 0$.
The law of $w$ is fully characterized by the law of $\langle w, \One_{[0,1]^d} \rangle$. This principle is made rigorous and extended to many more test functions in \cite{Fageot2017unified}.

By convention, the terminology for the random variable $\langle w, \One_{[0,1]^d} \rangle$ is inherited by  the underlying white noise $w$. We have already exploited this principle for the definition of S$\alpha$S white noises, with the particular case of the Gaussian white noise. Another example is the \emph{Cauchy white noise}, that corresponds to the case $\alpha=1$. 

\paragraph{Compound-Poisson White Noises.}
A \emph{compound-Poisson white noise} is such that $\langle w, \One_{[0,1]^d} \rangle$ is a compound-Poisson random variable with characteristic function of the form \cite[Section 4.4.2]{Unser2014sparse} 
$\CF_{\langle w, \One_{[0,1]^d} \rangle}(\xi) = \exp ( \lambda ( \CF_{\mathrm{Jump}}(\xi) - 1) )$,
 where  $\lambda >0$ and $\CF_{\mathrm{Jump}}$ is the  characteristic function of a probably law $\mathscr{P}_{\mathrm{Jump}}$ such that $\mathscr{P}_{\mathrm{Jump}} \{0\}= 0$ (no singularity at the origin). The notation $\CF_{\mathrm{Jump}}$ is motivated by the fact that the underlying probability law is the common law of the jumps of the compound Poisson white noise \cite{Unser2011stochastic}. The L\'evy exponent of a compound Poisson white noise is bounded, hence its index is $\beta_\infty = 0$. The Pruitt index $\beta_0$ can take any value in  $(0,2]$ and is equal to $2$ if $\langle w, \One_{[0,1]^d} \rangle$ is symmetric with a finite variance.
 When the law of the jump is Gaussian (Cauchy, respectively), we call $w$ a \emph{Poisson-Gaussian} white noise (a \emph{Poisson-Cauchy} white noise, respectively).

\paragraph{Generalized Laplace White Noises.} 
 Another interesting infinitely divisible family is given by the generalized-Laplace laws. We follow here the notations of \cite{Koltz2001laplace}.  A \emph{generalized-Laplace white noise} is such that $\langle w, \One_{[0,1]^d} \rangle$ is a generalized Laplace variable whose characteristic function is given by $\CF_{\langle w, \One_{[0,1]^d} \rangle} (\xi) = \frac{1}{( 1 + \xi^2)^c} = \exp \left( - c \log ( 1 + \xi^2 ) \right)$ with $c>0$. When $c=1$, we recognize the Laplace law. The Blumenthal-Getoor and Pruitt indices of generalized Laplace white noises are  $\beta_\infty = 0$ (since $\Psi$ grows asymptotically slower than any polynomial) and $\beta_0 = 2$ (symmetric finite-variance white noise), respectively. 
 
\paragraph{Layered Stable White Noises.}
 Finally, we consider the family of white noises introduced by Houdr\'e and Kawai in \cite{Houdre2007layered} to illustrate the richness of the L\'evy family. 
 We first need some notation. 
 A \emph{L\'evy measure} is a measure $\nu$ on $\R$ such that $\nu\{0\} = 0$ and $\int_{\R} \inf(1 , t^2) \nu(\drm t) < \infty$. Then, for $\nu$ a symmetric L\'evy measure, the function 
 $\Psi(\xi) = - \int_{\R} (1 - \cos (\xi t)) \nu (\drm t)$ is a  L\'evy exponent. This is a particular case of the L\'evy-Khintchine decomposition of  a  L\'evy exponent \cite[Theorem 4.2]{Unser2014sparse}. Then, for $\alpha, \beta \in (0,2)$, we consider the  measure
 \begin{equation}
 	\nu_{\alpha,\beta}(\drm t) = \One_{\abs{t} \leq 1} \frac{\drm t}{\abs{t}^{\alpha +1 }} + \One_{\abs{t} > 1} \frac{\drm t}{\abs{t}^{\beta +1 }}.
 \end{equation}
 We easily check that $\nu_{\alpha,\beta}$ is a symmetric L\'evy measure and define therefore the   L\'evy exponent
$ 	\Psi_{\alpha,\beta} (\xi) = - \int_{\R} (1 - \cos(\xi t)) \nu_{\alpha,\beta}(\drm t). $ When $\alpha = \beta$, we recover a S$\alpha$S white noise with L\'evy measure  $\nu_{\alpha} (\drm t) = \drm t / {|t|^{\alpha + 1}}$.
 The L\'evy white noise with exponent $\Psi_{\alpha,\beta}$ is called an \emph{layered stable white noise}. Its  interest for our purpose is that it displays all the possible joint behaviors of the L\'evy exponent at the origin and at infinity, as shown in Proposition \ref{prop:psialphabeta}. Many additional properties of layered stable laws and processes have been studied in \cite{Houdre2007layered}.
 
 \begin{proposition} \label{prop:psialphabeta}
	For $0<\alpha,\beta<2$, the L\'evy exponent $\Psi_{\alpha,\beta}$ satisfies
${\Psi_{\alpha,\beta}(\xi)}  \underset{\infty}{\sim} - C_\infty \abs{\xi}^{\alpha}$ and $
		  {\Psi_{\alpha,\beta}(\xi)}   \underset{0}{\sim} - C_0 \abs{\xi}^{\beta}$
	with $C_0 , C_\infty > 0$ some constants.
 \end{proposition}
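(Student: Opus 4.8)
The plan is to split the defining integral according to the two regimes of the Lévy measure $\nu_{\alpha,\beta}$ and to determine which piece dominates at $\infty$ and which at $0$. Since $\nu_{\alpha,\beta}$ is symmetric and $\cos$ is even, $\Psi_{\alpha,\beta}$ is an even function of $\xi$, so it suffices to treat $\xi>0$. Writing
\begin{equation*}
-\Psi_{\alpha,\beta}(\xi) = \underbrace{\int_{\abs{t}\le 1}(1-\cos\xi t)\,\frac{\drm t}{\abs{t}^{\alpha+1}}}_{=:I_1(\xi)} \ + \ \underbrace{\int_{\abs{t}> 1}(1-\cos\xi t)\,\frac{\drm t}{\abs{t}^{\beta+1}}}_{=:I_2(\xi)},
\end{equation*}
I would first record that for any $\gamma\in(0,2)$ the integral $c_\gamma := \int_{\R} \frac{1-\cos u}{\abs{u}^{\gamma+1}}\,\drm u$ is finite and strictly positive: near the origin $1-\cos u \sim u^2/2$ makes the integrand of order $\abs{u}^{1-\gamma}$, which is integrable precisely because $\gamma<2$, while at infinity $1-\cos u\le 2$ yields integrability because $\gamma>0$. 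These numbers will be the constants in the statement, namely $C_\infty = c_\alpha$ and $C_0 = c_\beta$.

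For the fine-scale ($\xi\to\infty$) behavior, the substitution $u=\xi t$ in $I_1$ gives $I_1(\xi) = \xi^{\alpha}\int_{\abs{u}\le\xi}\frac{1-\cos u}{\abs{u}^{\alpha+1}}\,\drm u$; as $\xi\to\infty$ the nonnegative integrand is integrated over an expanding domain, so $\xi^{-\alpha}I_1(\xi)\to c_\alpha$ by monotone convergence. For the remainder I would use the crude bound $1-\cos\xi t\le 2$ together with $\int_{\abs{t}>1}\abs{t}^{-\beta-1}\,\drm t<\infty$ to obtain $I_2(\xi) = O(1)$, which is $o(\xi^\alpha)$ since $\alpha>0$. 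Hence $-\Psi_{\alpha,\beta}(\xi) = I_1(\xi)+I_2(\xi) \sim c_\alpha\,\xi^\alpha$, that is $\Psi_{\alpha,\beta}(\xi)\underset{\infty}{\sim} -C_\infty\abs{\xi}^\alpha$.

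For the coarse-scale ($\xi\to 0$) behavior the roles are reversed: the same substitution $u=\xi t$ applied to $I_2$ yields $I_2(\xi) = \xi^\beta\int_{\abs{u}>\xi}\frac{1-\cos u}{\abs{u}^{\beta+1}}\,\drm u$, whose domain now expands to all of $\R$ as $\xi\to 0$, so $\xi^{-\beta}I_2(\xi)\to c_\beta$. For the other term I would use the quadratic bound $1-\cos\xi t\le \xi^2 t^2/2$, giving $I_1(\xi)\le \tfrac{\xi^2}{2}\int_{\abs{t}\le 1}\abs{t}^{1-\alpha}\,\drm t = \tfrac{\xi^2}{2-\alpha}$, which is finite because $\alpha<2$ and is $o(\xi^\beta)$ because $\beta<2$. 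Therefore $-\Psi_{\alpha,\beta}(\xi)\sim c_\beta\,\xi^\beta$, i.e. $\Psi_{\alpha,\beta}(\xi)\underset{0}{\sim} -C_0\abs{\xi}^\beta$. The only delicate points are the finiteness of $c_\alpha$ and $c_\beta$ — which is exactly where the hypothesis $0<\alpha,\beta<2$ enters — and the passage to the limit in the truncated integrals, which is immediate because the integrands are nonnegative and globally integrable; I do not anticipate any substantive obstacle.
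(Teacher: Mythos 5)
Your proof is correct and follows essentially the same route as the paper's: the same decomposition of the integral at $\abs{t}=1$, the same change of variables $u=\xi t$ in the dominant piece, the crude bound $1-\cos \le 2$ for the tail term at infinity, and the quadratic bound $1-\cos(\xi t)\le \xi^2t^2/2$ for the small-$t$ term near the origin. No gaps.
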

 
 \begin{proof}
 	We have
	\begin{equation}
		\Psi_{\alpha,\beta}(\xi) =  - \int_{\abs{t}\leq 1} (1 - \cos(\xi t)) \frac{\drm t}{\abs{t}^{\alpha+1}} - \int_{\abs{t}> 1} (1 - \cos(\xi t)) \frac{\drm t}{\abs{t}^{\beta+1}} := \Psi_1(\xi) + \Psi_2(\xi).
	\end{equation}	
	 Then, by the  change of variable $x = \xi t$, we have  that 
	\begin{equation}
	\Psi_1(\xi) = - \left( \int_{\abs{x} \leq  \abs{\xi}} ( 1 - \cos x) \frac{\drm x}{\abs{x}^{\alpha+1}}  \right) \abs{\xi}^{\alpha} \underset{\infty}{\sim} - \left( \int_{\R} ( 1 - \cos x) \frac{\drm x}{\abs{x}^{\alpha+1}}  \right) \abs{\xi}^{\alpha}
	\end{equation} 
	while $\abs{\Psi_2(\xi)} \leq \int_{\abs{t} > 1} 2 \frac{\drm t}{\abs{t}^{\beta + 1}} = o (\abs{\xi}^{\alpha})$, implying the expected asymptotic behavior with $C_\infty = \int_{\R} ( 1 - \cos x) \frac{\drm x}{\abs{x}^{\alpha+1}} $.
	 Similarly, we have  that 
$ \Psi_2(\xi) \underset{0}{\sim} - \left( \int_{\R} (1 - \cos x) \frac{\drm x}{\abs{x}^{\beta+1}} \right) \abs{\xi}^{\beta}$ while $ \abs{\Psi_1(\xi)} \leq \frac{1}{2} \left( \int_{\abs{t}\leq 1} \frac{\drm t}{\abs{t}^{\alpha-1}} \right) \abs{\xi}^2 = o (\abs{\xi}^{\beta})$, where we have used that $\abs{1-\cos(\xi t)} \leq \frac{\xi^2 t^2}{2}$. This implies the behavior of $\Psi$ at the origin  with $C_0 =  \int_{\R} (1 - \cos x) \frac{\drm x}{\abs{x}^{\beta+1}} $.
 \end{proof}
 	
	Proposition \ref{prop:psialphabeta}  implies that $(\beta_\infty , \beta_0) = (\alpha, \beta)$. 
 	Therefore,  $\gamma$-order linear  processes based on a layered stable white noise share the interesting following property: While failing to be self-similar, they offer a transition from a local self-similarity of order $H_{\loc} = \gamma + d( 1 / \alpha - 1) $   to an asymptotic self-similarity of order $H_{\infty} = \gamma + d( 1 / \beta - 1 )$. This can be of interest for modeling purposes.
 
\paragraph{Summary.}
By studying the behavior  of the L\'evy exponent around the origin and at $\infty$ (as we did for $\Psi_{\alpha,\beta}$), one easily obtains the indices of the L\'evy white noises of Table \ref{table:noises}. 		
\begin{table} [h!] \label{table:noises}
\footnotesize  
\centering
\caption{Some L\'evy white noises with their Blumenthal-Getoor indices.}
\begin{tabular}{lcccccc} 
\hline
\hline 
White noise & Parameter & $\Psi(\xi)$ &  $\beta_0$ & $\beta_\infty$  \\
\hline\\[-1ex]
Gaussian & $\sigma^2 >0$ & $- \sigma^2 \xi^2 / 2$   & $2$ & $2$   \\
Non-Gaussian S$\alpha$S  & $\alpha \in (0,2)$  & $-\lvert \xi \rvert^\alpha$ & $\alpha$ & $\alpha$ \\
Generalized Laplace & $c>0$ & $- c \log(1 + \xi^2)$ & $2$ & $0$ \\
Symmetric finite-variance compound Poisson  & $\lambda >0,\mathscr{P}_{\mathrm{Jump}}$ & $\lambda  (\CF_{\mathrm{Jump}} (\xi) - 1)$  & $2$ & $0$ \\
Compound Poisson with S$\alpha$S jumps  & $\lambda > 0, \alpha \in(0,2)$ & $\lambda ( \mathrm{e}^{-\abs{\xi}^\alpha} - 1)$  & $\alpha$   & $0$   \\
Layered stable & $\alpha,\beta \in (0,2)$ & $\Psi_{\alpha,\beta}(\xi)$  & $\alpha$ & $\beta$ \\
\hline
\hline
\end{tabular} \label{table:noises}
\end{table}

	\subsection{L\'evy Processes and Sheets} \label{subsec:levysheets}

	The canonical basis of $\R^d$ is $(\bm{e}_k)_{k=1\ldots d}$. We denote by $\Der_{k}$ the partial derivative along the direction $\bm{e}_k$.
	Then, a \emph{L\'evy sheet} in dimension $d$ is a solution of 
	\begin{equation}
			\Lop s = \Der_{1} \cdots \Der_{d} s = w
	\end{equation}
	with $w$ a $d$-dimensional L\'evy white noise \cite{Dalang1992}. When $d=1$, one recognizes  the family of L\'evy processes that corresponds to the differential equation $\Der s = w$ in dimension $d=1$.
	
	The linear operator $\Lop   = \Der_{1} \cdots \Der_{d}$ is continuous from $\S(\R^d)$ to $\S(\R^d)$ and $d$-homogeneous. Its adjoint  $\Lop^* = (-1)^d \Der_{1} \cdots \Der_{d}$ admits the  natural  $(-d)$-homogeneous (left and right) inverse defined by
$	(\Lop^*)^{-1} \varphi (\bm{x}) = \int_{(-\infty, x_1) \times \cdots \times (- \infty , x_d)} \varphi(\bm{t})\drm \bm{t}$	for $\bm{x} = (x_1, \ldots , x_d)$ and $\varphi \in \S(\R^d)$. Unfortunately, $(\Lop^*)^{-1}$ is unstable in the sense that it does not map $\S(\R^d)$ in any $L^p (\R^d)$ space, $0<p\leq 2$ (and, \emph{a fortiori}, not in $\mathcal{R}(\R^d)$).
		We can however correct $(\Lop^*)^{-1}$ to transform it into a stable \textit{left} inverse. For this, we define $\Top$ as the adjoint of  the operator 
$			\Top^*  \varphi (\bm{x}) = \int_{(0, x_1) \times \cdots \times (0 , x_d)} \varphi(\bm{t})\drm \bm{t}.
$		The operator $\Top$ is $(-d)$-homogeneous and continuous from $\S(\R^d)$ to $\mathcal{R}(\R^d)$ \cite[Section 4.2]{Fageot2014}. 
		We satisfy therefore the Condition  (C1) of Theorem \ref{coro:CFwT} and define $s =  (\Der_1 \cdots \Der_d)^{-1}w$ with characteristic functional $\CF_s(\varphi) = \CF_w(\Top \varphi)$ for any white noise $w$.	 
		
		This way of defining $s$ can be interpreted in terms of boundary conditions---it   imposes that $s(\bm{x}) = 0$ almost surely for every $\bm{x} = (x_1, \ldots , x_d)$ such that one of the $x_k$ is $0$. In particular, in dimension $d=1$, it imposes that $s(0)=0$ almost surely. 	Here,  the random variable $s(\bm{x})$, not well-defined from the specification of $s$ as a generalized random process, is understood as the limit in probability of random variables $\langle s, \varphi \rangle$ where $\varphi$ approximates the shifted Dirac impulse $\delta(\cdot - \bm{x})$ in an adequate sense. This extension is possible for the same reasons one can consider the random variable $\langle w , \One_{[0,1]^d} \rangle$ for any L\'evy noise, what has been already discussed below. 
	Applying the results of Section \ref{subsec:main}, we directly deduce Proposition \ref{prop:sheets}.

	\begin{proposition} \label{prop:sheets}
	Consider $w$ a L\'evy white noise with indices $0<\beta_0, \beta_\infty\leq 2$, and $s = (\Der_1 \cdots \Der_d)^{-1} w$ as above. Then,
	\begin{itemize}
		\item if $\Psi(\xi) \underset{0}{\sim} - C \abs{\xi}^{\beta_0}$ for some $C>0$, then $a^{d / \beta_0} s(\cdot / a) \underset{a \rightarrow 0}{\longrightarrow} s_{ \Der_1 \cdots \Der_d, \beta_0}$;
	\item if $\Psi(\xi) \underset{\infty}{\sim} - C \abs{\xi}^{\beta_\infty}$ for some $C>0$, then $a^{d / \beta_\infty} s(\cdot / a) \underset{a \rightarrow \infty}{\longrightarrow} s_{ \Der_1 \cdots \Der_d, \beta_\infty}$.
	\end{itemize}
	Here, $s_{\Der_1,\ldots ,\Der_d, \alpha} =  (\Der_1 \cdots \Der_d)^{-1}w_\alpha$, where $w_\alpha$ is a S$\alpha$S white noise. 
	\end{proposition}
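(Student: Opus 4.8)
The plan is to verify that the operator $\Top$ constructed for the L\'evy sheet satisfies the hypotheses of Theorems \ref{theo:coarsescale} and \ref{theo:finescale}, so that Proposition \ref{prop:sheets} follows as an immediate corollary. Concretely, I would first recall from the discussion preceding the statement that $\Lop = \Der_1 \cdots \Der_d$ is linear, shift-invariant, continuous from $\S(\R^d)$ to $\S(\R^d) \subset \S'(\R^d)$, and $d$-homogeneous (so $\gamma = d \geq 0$), and that $\Top$, the adjoint of $\Top^* \varphi(\bm{x}) = \int_{(0,x_1)\times\cdots\times(0,x_d)} \varphi(\bm{t})\,\drm\bm{t}$, is a $(-d)$-homogeneous left inverse of $\Lop^*$ that maps $\S(\R^d)$ continuously into $\mathcal{R}(\R^d)$, by \cite[Section 4.2]{Fageot2014}. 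This is precisely Condition (C1).

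Next, since (C1) holds for the pair $(\Top, \Psi)$ for \emph{any} L\'evy exponent $\Psi$, both Theorem \ref{theo:coarsescale} and Theorem \ref{theo:finescale} apply with $\gamma = d$. Substituting $\gamma = d$ into the renormalization exponent $\gamma + d(1/\beta - 1)$ gives $d + d(1/\beta_0 - 1) = d/\beta_0$ for the coarse-scale case and $d/\beta_\infty$ for the fine-scale case, which matches the exponents in the statement. Thus, under the hypothesis $\Psi(\xi) \underset{0}{\sim} -C\abs{\xi}^{\beta_0}$, Theorem \ref{theo:coarsescale} yields $a^{d/\beta_0} s(\cdot/a) \overset{(d)}{\longrightarrow} s_{\Lop,\beta_0}$ as $a\to 0$, and under $\Psi(\xi) \underset{\infty}{\sim} -C\abs{\xi}^{\beta_\infty}$, Theorem \ref{theo:finescale} yields $a^{d/\beta_\infty} s(\cdot/a) \overset{(d)}{\longrightarrow} s_{\Lop,\beta_\infty}$ as $a\to\infty$. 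The limit processes are by construction the $\gamma$-order linear processes $(\Der_1\cdots\Der_d)^{-1} w_\alpha$ driven by the corresponding S$\alpha$S white noise, since the same $\Top$ serves to define them (recall a S$\alpha$S exponent is $\alpha$-admissible and $\mathcal{R}(\R^d) \subset L^\alpha(\R^d)$, so they are well defined).

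There is essentially no obstacle here: the only thing to check carefully is that the operator $\Top$ used in the construction of $s$ is genuinely $(-\gamma)$-homogeneous with $\gamma = d$ and continuous into $\mathcal{R}(\R^d)$, and both facts are already established in the cited references and recalled in Section \ref{subsec:levysheets}. I would therefore write the proof as a short paragraph: it suffices to observe that $s = (\Der_1\cdots\Der_d)^{-1} w$ is a $d$-order linear process satisfying Condition (C1), apply Theorems \ref{theo:coarsescale} and \ref{theo:finescale} with $\gamma = d$, and simplify the exponents. If anything requires a word of justification it is merely that $\beta_0, \beta_\infty \in (0,2]$ is assumed so that the statements of those theorems apply verbatim.
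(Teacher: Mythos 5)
Your proposal is correct and follows exactly the paper's own route: verify that $\Top$ satisfies Condition (C1) with $\gamma = d$ (as established in Section \ref{subsec:levysheets} and \cite[Section 4.2]{Fageot2014}), then apply Theorems \ref{theo:coarsescale} and \ref{theo:finescale} and simplify the exponent $d + d(1/\beta - 1) = d/\beta$. Nothing is missing.
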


We illustrate our results on dimension $1$ with some simulations of L\'evy processes. 
First, we consider three L\'evy processes driven respectively by the Laplace white noise, the Poisson-Gaussian white noise, and the Poisson-Cauchy white noise.
We look at the processes at three different scales by representing them on $[0,1]$,   $[0,10]$, and   $[0,1000]$. 
We only generate one process of each type and represent it on the different intervals: this corresponds to zooming out it.
The theoretical prediction at large scale is as follows: the Laplace and Poisson-Gaussian processes should be statistically indistinguishable from the Brownian motion, while the Poisson-Cauchy process should be statistically indistinguishable from the Cauchy process (also called L\'evy flight).
We see in Figure \ref{fig:largeScale} that this is observed on simulations. 
For comparison purposes, we also represent one realization of the expected limit process.

\begin{figure}[t]
\centering
	\includegraphics[width=1\textwidth]{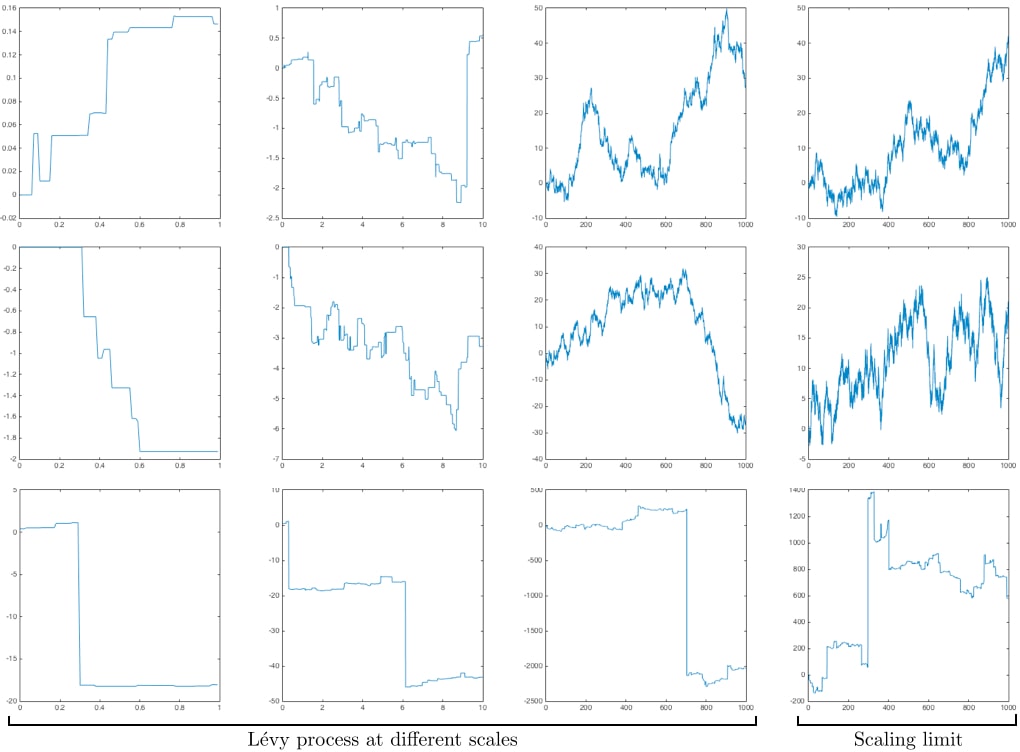}
	\caption{L\'evy processes at three different scale and comparison with the corresponding self-similar process at large scale according to Theorem \ref{theo:coarsescale}.}
	\label{fig:largeScale}
\end{figure}

We now illustrate the difference between fine-scale and coarse scale behaviors. To do so, we consider a L\'evy white noise $w$, sum of a Gaussian and a Cauchy white noise that are independent. Then, we have $\beta_0 = 1$ and $\beta_\infty = 2$. The prediction is that the L\'evy process driven by $w$ converges to the Brownian motion at fine scales and to the L\'evy flight at coarse scales. Again, the theoretical prediction is observed on simulations on Figure \ref{fig:finecoarse}, where one realization of the process is represented on $[0,1/10]$ (fine scale), $[0,10]$ (intermediate scale), and $[0,1000]$ (coarse scale).

\begin{figure}[t]
\centering
	\includegraphics[width=1\textwidth]{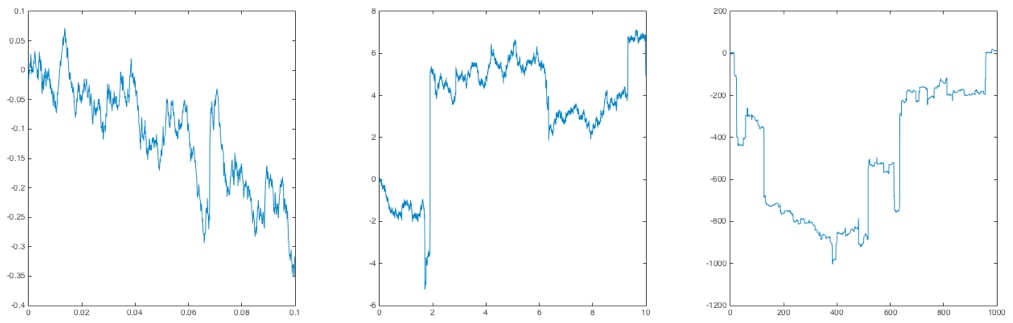}
	\caption{Sum of a L\'evy flight and a Brownian motion at thee different scales.}
	\label{fig:finecoarse}
\end{figure}

	\subsection{Fractional L\'evy Processes and Fields} \label{subsec:fractionallevy}

	In dimension $d$,  we consider the stochastic differential equation
	\begin{equation}
		\Lop s = \FL s = w,
	\end{equation}	
	where $\FL$ is the fractional Laplacian whose Fourier multiplier is $\lVert \bm{\omega} \rVert^\gamma$ with $\gamma \geq 0$ and $\gamma / 2 \notin \N$. 
	The fractional Laplacian is self-adjoint and $\gamma$-homogeneous. For $(p,\gamma)$ satisfying
	\begin{equation} \label{eq:conditionFL}
	p \geq 1 \text{ and } (\gamma + d/p - 1) \notin \N,
	\end{equation}
$\FL$ admits a  (unique)   $(-\gamma)$-homogeneous left inverse $\Top_{\gamma,p}$ that continuously map $\S(\R^d)$ into $L^p(\R^d)$ \cite[Theorem 3.7]{Sun-frac}. For such $p$, if the L\'evy white noise is \mbox{$p$-admissible}, we satisfy Condition  (C2) of Theorem  \ref{coro:CFwT} and define $s =  (\FL)^{-1}w$ with characteristic functional $\CF_s(\varphi) = \CF_w(\Top_{\gamma,p} \varphi)$. The process $s$ is called a fractional L\'evy process (a fractional L\'evy field when $d\geq 2$). 	
	Again, the direct application of the results of Section \ref{subsec:main} yields  Proposition \ref{prop:gamma}.
	
	\begin{proposition} \label{prop:gamma}
	For $(p,\gamma)$ satisfying \eqref{eq:conditionFL}, consider  a $p$-admissible L\'evy white noise $w$ with indices $0<\beta_0, \beta_\infty\leq 2$, and $s = (\FL)^{-1} w$ as above. Then,
	\begin{itemize}
		\item if $\Psi(\xi) \underset{0}{\sim} - C \abs{\xi}^{\beta_0}$ for some $C>0$, then $a^{\gamma + d (1 / \beta_0-1)} s(\cdot / a) \underset{a \rightarrow 0}{\longrightarrow} s_{\FL, \beta_0}$;
	\item if $\Psi(\xi) \underset{\infty}{\sim} - C \abs{\xi}^{\beta_\infty}$ for some $C>0$, then $a^{\gamma + d(1  / \beta_\infty -1)} s(\cdot / a) \underset{a \rightarrow \infty}{\longrightarrow} s_{ \FL, \beta_\infty}$.
	\end{itemize}
	Here, $s_{\FL, \alpha} =  (-\Delta)^{-\gamma / 2}w_\alpha$, where $w_\alpha$ is a S$\alpha$S white noise. 
	\end{proposition}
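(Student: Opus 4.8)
The plan is to derive Proposition~\ref{prop:gamma} as a direct instance of Theorems~\ref{theo:coarsescale} and~\ref{theo:finescale}, so that the whole task reduces to checking that their hypotheses hold for $\Lop = \FL$ and $\Top = \Top_{\gamma,p}$. First I would record the structural facts: $\FL$ is linear, shift-invariant, self-adjoint (hence $\Lop^* = \FL$), $\gamma$-homogeneous, and continuous from $\S(\R^d)$ to $\S'(\R^d)$; and, by \cite[Theorem~3.7]{Sun-frac} together with~\eqref{eq:conditionFL}, $\Top_{\gamma,p}$ is a $(-\gamma)$-homogeneous left inverse of $\FL$ mapping $\S(\R^d)$ continuously into $L^p(\R^d)$. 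Since $w$ is $p$-admissible, Theorem~\ref{coro:CFwT} (Condition (C2)) then guarantees that $s = (\FL)^{-1} w$ is a well-defined $\gamma$-order linear process with $\CF_s(\varphi) = \CF_w(\Top_{\gamma,p}\varphi)$, and likewise that $s_{\FL,\beta_{0/\infty}} = (-\Delta)^{-\gamma/2} w_{\beta_{0/\infty}}$ make sense once the matching stability of $\Top_{\gamma,p}$ is established.

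For the coarse-scale claim I would verify Condition~(C2) of Theorem~\ref{theo:coarsescale}. The $p$-admissibility $\abs{\Psi(\xi)} \leq C\abs{\xi}^p$ forces $\beta_\infty \leq p \leq \beta_0$, as recalled at the end of Section~\ref{subsec:blumenthal}. Using this global bound for $\abs{\xi}\geq 1$, the assumed equivalence $\Psi(\xi) \underset{0}{\sim} -C\abs{\xi}^{\beta_0}$ near the origin, and the boundedness of the continuous function $\Psi$ on the remaining compact region, one gets $\abs{\Psi(\xi)} \leq C'\abs{\xi}^{\beta_0}$ for every $\xi$, i.e.\ $\Psi$ is $\beta_0$-admissible. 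Moreover $\Top_{\gamma,p}\varphi$ is a bounded function lying in $L^p(\R^d)$, hence in $L^{\beta_0}(\R^d)$ since $\beta_0 \geq p$, and an interpolation estimate $\lVert \Top_{\gamma,p}\varphi\rVert_{\beta_0} \leq \lVert\Top_{\gamma,p}\varphi\rVert_p^{p/\beta_0}\lVert\Top_{\gamma,p}\varphi\rVert_\infty^{1-p/\beta_0}$ upgrades the continuity of $\Top_{\gamma,p}$ to a map into $L^{\beta_0}(\R^d)$. Thus~(C2) holds and Theorem~\ref{theo:coarsescale} yields $a^{\gamma + d(1/\beta_0 - 1)} s(\cdot/a) \overset{(d)}{\longrightarrow} s_{\FL,\beta_0}$ as $a\to 0$, with $\FL s_{\FL,\beta_0} \overset{(d)}{=} w_{\beta_0}$, i.e.\ $s_{\FL,\beta_0} = (-\Delta)^{-\gamma/2}w_{\beta_0}$.

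The fine-scale claim follows along the same lines from Theorem~\ref{theo:finescale}: now $\beta_\infty \leq p$ together with $\Psi(\xi) \underset{\infty}{\sim} -C\abs{\xi}^{\beta_\infty}$ shows that $\Psi$ is $\beta_\infty$-admissible, and it remains to check that $\Top_{\gamma,p}$ maps $\S(\R^d)$ continuously into $L^{\beta_\infty}(\R^d)$. This last verification is the main obstacle: when $\beta_\infty < p$, boundedness of $\Top_{\gamma,p}\varphi$ no longer suffices, and one has to control its polynomial decay rate at infinity, which is governed by the number of Taylor-correction terms built into $\Top_{\gamma,p}$ — hence by~\eqref{eq:conditionFL} — ; when $\beta_\infty \geq 1$ one may instead invoke $\Top_{\gamma,\beta_\infty}$, which coincides with $\Top_{\gamma,p}$ on $\S(\R^d)$ by uniqueness of the $(-\gamma)$-homogeneous left inverse. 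Granting this, Theorem~\ref{theo:finescale} gives $a^{\gamma + d(1/\beta_\infty - 1)} s(\cdot/a) \overset{(d)}{\longrightarrow} s_{\FL,\beta_\infty} = (-\Delta)^{-\gamma/2}w_{\beta_\infty}$ as $a\to\infty$, which concludes the proof.
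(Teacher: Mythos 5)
Your overall strategy is exactly the paper's: the published proof of Proposition \ref{prop:gamma} is the single sentence that it follows from ``the direct application of the results of Section \ref{subsec:main}'', i.e.\ of Theorems \ref{theo:coarsescale} and \ref{theo:finescale} under Condition (C2). You go further than the paper by noticing that the application is not literally direct unless $p$ equals $\beta_0$ (resp.\ $\beta_\infty$): Condition (C2) of Theorem \ref{theo:coarsescale} asks for $\beta_0$-admissibility and for $\Top$ to map $\S(\R^d)$ continuously into $L^{\beta_0}(\R^d)$, whereas the proposition only supplies $p$-admissibility and a map into $L^p(\R^d)$. Your upgrade of admissibility is sound: the global bound $\abs{\Psi(\xi)}\leq C\abs{\xi}^p$ forces $\beta_\infty\leq p\leq\beta_0$, and combined with the assumed power-law equivalence of $\Psi$ at $0$ (resp.\ $\infty$) and continuity on the intermediate compact region it yields $\beta_0$- (resp.\ $\beta_\infty$-) admissibility.

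Two steps remain unsupported, however. First, the interpolation inequality you use in the coarse-scale case presumes that $\Top_{\gamma,p}\varphi$ is bounded; this is plausible for the Sun--Unser left inverse but is stated nowhere in the paper and is not automatic for an operator that is only known to land in $L^p(\R^d)$, so it needs its own argument from the explicit construction. Second, and more seriously, your fine-scale case is concluded only by ``granting'' that $\Top_{\gamma,p}$ maps $\S(\R^d)$ continuously into $L^{\beta_\infty}(\R^d)$ when $\beta_\infty<p$. Since $L^p(\R^d)\not\subset L^{\beta_\infty}(\R^d)$, this is precisely the nontrivial verification (it hinges on the polynomial decay of $\Top_{\gamma,p}\varphi$ at infinity), and your fallback via ``uniqueness'' is not valid as stated: the paper only asserts uniqueness of the $(-\gamma)$-homogeneous left inverse \emph{within} the class of operators mapping into a fixed $L^q(\R^d)$, and $\Top_{\gamma,\beta_\infty}$ and $\Top_{\gamma,p}$ genuinely differ when $\gamma+d/\beta_\infty$ and $\gamma+d/p$ call for different numbers of Taylor corrections (besides, \eqref{eq:conditionFL} requires $q\geq 1$, so $\Top_{\gamma,\beta_\infty}$ need not even be covered by the cited theorem). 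So your proposal matches the paper's route and is, if anything, more candid about the hypotheses that must be checked, but it does not close the fine-scale gap it correctly identifies; to be fair, the paper's one-line proof does not address it either.
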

	
	In dimension $d=1$, identical results can be derived for the fractional derivative $\Lop = \Der^\gamma$ in a very similar fashion. This includes in particular the fractional Brownian motions \cite{Mandelbrot1968} and its L\'evy-driven generalizations. The construction of stable inverses of the adjoint of $\Der^\gamma$ is the subject of \cite[Section 5.5.1]{Unser2014sparse}. 

\section*{Acknowledgements}
The authors are grateful to Thomas Humeau for the fruitful discussions that lead to this work, in particular concerning Proposition \ref{prop:Rcontinuous}.
We also warmly thank Virginie Uhlmann for her help for the simulations.
The research leading to these results was funded by the ERC grant agreement No 692726 - FUN-SP.

\bibliographystyle{plain}
\bibliography{references}

\end{document}